\documentclass{siamart251216}

\usepackage{amsfonts,
			amsmath,
			graphicx,
			url}
\usepackage{algorithm,algpseudocode}
\usepackage{amssymb}
\usepackage{mathtools}
\usepackage{booktabs}
\usepackage{framed}
\usepackage{graphicx}
\usepackage{tikz}
\usepackage{epstopdf}
\usepackage{hyperref}
\usepackage{subfig}
\newcommand{\Prox}{\mathrm{Prox}}

\newcommand{\Proxrphi}{\mathrm{Prox}_{r\phi}}

\newcommand{\inner}[2]{\left\langle#1, #2\right\rangle}

\newcommand{\R}{\mathbb{R}}

\newcommand{\dx}{\textrm{d}x}

\newcommand{\xb}{\bar{x}}

\newtheorem{assumption}{Assumption}[section]
\newcommand{\kgrad}{\kappa_{\mathrm{grad}}}
\newcommand{\kobj}{\kappa_{\mathrm{obj}}}
\newcommand{\kfcd}{\kappa_{\mathrm{fcd}}}
\newcommand{\krad}{\kappa_{\mathrm{rad}}}

\newcommand{\ared}{\mathrm{ared}}
\newcommand{\pred}{\mathrm{pred}}
\newcommand{\cred}{\mathrm{cred}}
\newcommand{\dom}{\mathrm{dom }}

\DeclarePairedDelimiterX{\inp}[2]{\langle}{\rangle}{#1, #2}
\DeclarePairedDelimiterX{\norm}[1]{\|}{\|}{#1}

\usepackage{enumitem}
\newcommand{\crefpart}[2]{\Cref{#1}.\textup{\ref{#2}}}

\title{Convergence Analysis of an Inexact Proximal Trust-Region Method%
\thanks{%
This research was sponsored, 
in part, by the Department of Energy Office of Science under the Advanced 
Early Career Research Programs.  Sandia National Laboratories is a multimission laboratory managed and operated by
National Technology and Engineering Solutions of Sandia, LLC., a wholly owned subsidiary of Honeywell International, Inc., for the U.S.~Department of Energy's National Nuclear Security Administration under contract DE-NA0003525. This paper describes objective technical results and analysis. Any subjective views or opinions that might be expressed in the paper do not necessarily  represent the views of the U.S.~Department of Energy or the United States Government.     
}}              
            
\author{        
Robert J. Baraldi\footnotemark[2] \and                                                                                           
Drew P. Kouri\footnotemark[2] \and
Leandro Farias Maia\footnotemark[3]
}               
                
\ifpdf      
\hypersetup{
  pdftitle={Inexact Proximal Trust-Region Method},
  pdfauthor={Robert J. Baraldi, Drew P. Kouri, and Leandro F. Maia}                                                                                                                        
}               
\fi

\begin{document}

\maketitle
\renewcommand{\thefootnote}{\fnsymbol{footnote}}
\footnotetext[2]{Optimization and Uncertainty Quantification, Sandia National
  Laboratories, PO Box 5800, Albuquerque, 87185-1320, NM, USA;
  \{rjbaral@sandia.gov, dpkouri@sandia.gov\}}
\footnotetext[3]{Department of Mechanical, Industrial, and Manufacturing Engineering,
Oregon State University, Rogers Hall 410
Corvallis, OR 97331, USA; leandro.maia@oregonstate.edu}

\begin{abstract}
We analyze an inexact, proximal trust-region algorithm for minimizing the sum of a smooth nonconvex function and a nonsmooth convex function. 
Our algorithm leverages the proximal gradient as a benchmark to guarantee global convergence.  However, evaluating the proximal gradient involves solving a nonsmooth convex optimization problem that in many applications cannot be performed exactly.
As such, we develop a framework that permits inexact evaluations of the proximity operator and we show the proposed inexactness conditions yield descent properties analogous to the exact proximal gradient.
We further discuss practical procedures for computing approximate proximity operators for two common applications: {\em (i)} the nonsmooth term is the sum of nonsmooth convex functions where one is composed with an affine map and {\em (ii)} using alternative inner products to facilitate the evaluation of the proximity operator. 
We demonstrate our algorithm and confirm our analysis with two numerical experiments arising in PDE-constrained optimization.
\end{abstract}
\begin{keywords}
Nonsmooth Optimization, Trust Regions,
Proximal Methods, 
Inexact Computations,
Convex Optimization, 
Nonlinear Programming,
PDE-Constrained Optimization
\end{keywords}

\begin{MSCcodes}
49M37, 49K20, 49J20, 65M12, 65M15, 65M60, 90C30, 93C20
\end{MSCcodes}

\section{Introduction}
Our goal is the efficient numerical solution of the nonsmooth, nonconvex optimization problem
\begin{equation}\label{eq:optprob}
  \min_{x\in X}\; \left\{F(x) \coloneqq f(x)+\phi(x)\right\},
\end{equation}
where $X$ is a real Hilbert space, 
$f:X\to\R$ is a smooth nonconvex function and $\phi:X\to(-\infty,+\infty]$ is a proper, closed and convex function.
Nonsmooth problems of the form \eqref{eq:optprob} arise throughout modern optimization.
In particular, this structure appears naturally in sparse estimation, convex-constrained optimization, and inverse, control and design problems, see e.g., \cite{Baraldi2023,Beck17,kouri2018inexact} 
and the references therein. 
In the context of optimal control and design, the smooth term $f$ often depends on the state of a nonlinear physical system and the nonsmooth term $\phi$ encodes 
structural information such as sparsity, constraints, and variational penalties 
\cite{bendsoe2013topology,clason2018total,lazarov2011filters,manns2023on}. 
A central challenge in numerically solving \eqref{eq:optprob} is to exploit the problem structure without sacrificing the ability to handle nonconvexity, inexact objective function and derivative evaluations, and large-scale computation.  

The authors of \cite{Baraldi2023} propose a trust-region method for solving \eqref{eq:optprob} that exploits inexact evaluations of the smooth objective function $f$ and its gradient.
However, the convergence analysis for this trust-region method requires that the proximity operator of $\phi$ is computed exactly --- in practice this implies that the proximity operator has an analytical solution.
Since these closed-form solutions do not exist for many functions, we will generalize the method from \cite{Baraldi2023} to leverage inexact prox evaluations.

The proximity operator is a powerful tool for solving composite nonsmooth optimization problems like \eqref{eq:optprob}.
Algorithms that leverage the proximity operator, like ISTA and FISTA \cite{Beck17}, are often first-order and only apply to convex problems.
Like \cite{Baraldi2023}, these methods typically require that the proximity operator be computed exactly --- a requirement that is often computationally expensive or even impossible to satisfy in many applications and even simple $\phi$.
For example, if $X$ is finite dimensional with dot product defined by a positive diagonal matrix and $\phi$ is the $\ell^1$-norm, then the proximity operator has an analytical expression.
In contrast, if the dot product is defined by a non-diagonal symmetric positive definite matrix $M$, then evaluating the proximity operator requires a specialized iterative method to obtain an approximate solution.

First-order optimization methods that leverage inexact evaluations of the proximity operator have been studied in \cite{barre2023principled,MaiaInexactCyclic,Hua12,Kim-chuan12,phdMaia2025,salzo2012inexact,solodov1999hybrid,Richtarik14}.
In these works, the inexact proximity operator is characterized using approximate subdifferentials and fuzzy calculus, see e.g., \cite{clason2023introduction,mordukhovich2006variational} and the references therein.
For instance, \cite{salzo2012inexact} incorporates inexact evaluations of the proximity operator into 
momentum-based acceleration schemes for the proximal point method.
Most of these works only apply to convex problems and generally exhibit slow convergence rates. 
In contrast, the methods studied in
\cite{orban2025ir2n,aravkin2021proximal,orban2025indef} permit nonconvex $\phi$ and they allow $\phi$ to be evaluated inexactly, giving rise to inexact proximity operators.
However, these approaches can be restrictive to implement, requiring the model of $\phi$ to match in value and subdifferential at the center of the trust region.
The authors in \cite{orban2025ir2n} analyze a similar method for which they require that the norm of the computed step using the inexact proximity operator is at least a fraction of the minimum-norm element of the step computed using the exact (set-valued) one --- a condition that may be difficult to verify in practice unless a priori bounds are available on the exact step.
By extending the nonsmooth trust-region algorithm introduced in \cite{Baraldi2023}, 
we analyze a proximal Newton method that leverages inexact evaluations of the proximity operator, while maintaining the global convergence properties from \cite{Baraldi2023}.

The remainder of the paper is structured as follows.
\Cref{sec:prelim} introduces notation, definitions and problem assumptions used throughout, and presents a variety of inexact proximity operators based on the approximations introduced in \cite{salzo2012inexact}.
\Cref{sec:trust} provides a review of the proximal trust-region method from \cite{Baraldi2023} and the modifications that enable the use of inexact proximity operators.
In \Cref{sec:compprox}, we  demonstrate how to compute approximate proximity operators that satisfy the assumptions of our algorithm in some common situations, including when $\phi$ exhibits certain structure.
Finally, \Cref{sec:numerics} demonstrates the performance of our algorithm on two numerical examples from partial differential equation (PDE) constrained optimization.

\section{Preliminaries}
\label{sec:prelim}
Optimization in \eqref{eq:optprob} is performed over the real Hilbert space $X$ with inner product $\inp{\cdot}{\cdot}$ and norm $\|x\|=\inp{x}{x}^{1/2}$ for all $x\in X$.
Moreover, we require the following assumptions on the problem data.
\begin{assumption}[Problem data]
\label{as:probdata}
\begin{itemize}
    \item[1.] The function $\phi: X\to (-\infty,\infty]$ is proper, closed and convex with effective domain
\(
  \textup{dom}\,\phi\coloneqq \{x\in X\,\vert\, \phi(x)<+\infty\}.
\)
    \item[2.] The function $f : X \to \mathbb{R}$ is $L$-smooth on $\textup{dom }\phi$, i.e., there exists an open set $U \subseteq X$ with $\textup{dom }\phi\subset U$ on which $f$ is Fr\'echet differentiable with
    Lipschitz-continuous gradient $\nabla f(x)\in X$ and modulus $L>0$ for all $x\in U$.
    \item[3.] The objective function $F(x)\coloneqq f(x)+\phi(x)$ is bounded from below.
\end{itemize}
\end{assumption}

For notational convenience, we denote the closed unit ball in $X$ by $\mathbb{B}\coloneqq\{x\in X\,\vert\,\|x\|\le 1\}$ and to simplify the presentation, we identify the topological dual space $X^*$ with $X$ through the Riesz representation theorem. 
We note that all inner products, norms, etc.\ are with respect to $X$ unless otherwise indicated.
Finally, we denote the nonnegative and positive real numbers by $\mathbb{R}_{+}:=[0,+\infty)$ and $\mathbb{R}_{++}:=(0,+\infty)$, respectively.

In the subsequent sections, we will leverage various approximate subgradients to construct different approximate proximity operators.  We recall the standard definitions of the proximity operator and subdifferential here.
For a proper, closed and convex function $\phi:X\to(-\infty,+\infty]$, we denote the 
proximity operator of $\phi$ at $x\in X$ with index $r\in\mathbb{R}_{++}$ as the unique solution to the minimization problem
\begin{equation}\label{eq:exact.prox}
  \operatorname{Prox}_{r\phi}(x) \coloneqq \operatorname*{arg\,min}_{y\in X} \left\{\frac{1}{2r}\|y-x\|^2 + \phi(y)\right\},
\end{equation}
and the subdifferential of $\phi$ at $x\in X$ as the (possibly empty) set-valued map
\begin{equation}
\label{eq:subgradient.exact}
  \partial\phi(x) \coloneqq \{\xi\in X\,\vert\, \phi(y) \ge \phi(x) + \inp{\xi}{y-x} \quad\forall\, y\in X\},
\end{equation}
whose members $\xi\in\partial\phi(x)$ are called subgradients.
These two concepts are related through the nonsmooth Fermat's principle:
\begin{subequations}\label{eq:fermat-prox}
\begin{align}
  0\in \frac{1}{r}(p-x) + \partial\phi(p)
  &\iff
  x \in p + r \partial\phi(p) = (I+r\partial\phi)(p) \label{eq:fermat-prox-a} \\
  &\iff p=(I+r\partial\phi)^{-1}(x) \label{eq:fermat-prox-b}\\
  &\iff
  p = \operatorname{Prox}_{r\phi}(x), \label{eq:fermat-prox-c}
  \end{align}
\end{subequations}
where $I$ denotes the identity operator on $X$.
Note that \eqref{eq:fermat-prox-a} combined with the definition of the subdifferential \eqref{eq:subgradient.exact} yields
\begin{equation}\label{eq:subgrad-prox}
  \phi(y) \ge \phi(p) + \inp{r^{-1}(x-p)}{y-p} \quad\forall\, y\in X.
\end{equation}
Moreover, \eqref{eq:fermat-prox-b} indicates that the proximity operator of $\phi$ is the resolvent of the subdifferential $\partial\phi$ \cite[Equation~(24.6)]{bauschke2017monotone}.
These relationships are paramount in defining the various approximate proximity operators that we utilize in this work.
Additionally, the notion of the $\varepsilon$-subdifferential, $\varepsilon\in\mathbb{R}_+$, will also be critical.  That is, the $\varepsilon$-subdifferential of $\phi$ at $x\in X$ is the set-valued map
\begin{equation}\label{eq:subgradient.inexact}
  \partial_\varepsilon\phi(x) \coloneqq \{\xi\in X\,\vert\,\phi(y)\ge\phi(x)+\inp{\xi}{y-x} - \varepsilon\;\;\forall\, y\in X\},
\end{equation}
where members $\xi\in\partial_\varepsilon\phi(x)$ are called $\varepsilon$-subgradients of $\phi$ at $x$.

\subsection{Basic Properties of the Proximity Operator}
We now present several properties of the exact  proximal operator defined in~\eqref{eq:exact.prox}. 
For this and the subsequent discussions of the inexact proximity operator, we introduce the function $\phi_x^{(r)}:X\to (-\infty,+\infty]$ for fixed $x\in X$ and $r\in\mathbb{R}_{++}$ defined by
\begin{align}\label{eq:phi.x.r}
\phi_x^{(r)}(y) = \phi(y) + \frac{1}{2r}\|y-x\|^2 
\end{align}    
and recall that $\Proxrphi(x)$ is the unique minimizer of $\phi_x^{(r)}(y)$ over $y\in X$.
Using the definition of the subgradient, we can prove the following property about the regularized functional $\phi_x^{(r)}$. 
\begin{lemma}
\label{lem:prox-dec}
Let $x \in X$.
For every $y\in X$ and $r\in\mathbb{R}_{++}$, it holds that
\[
    \phi_x^{(r)}(y)-\phi_x^{(r)}(\Proxrphi(x)) \geq \frac{1}{2r}\norm{y-\Proxrphi(x)}^2.
\]
\end{lemma}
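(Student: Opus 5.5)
The plan is to use the subgradient inequality \eqref{eq:subgrad-prox} together with the polarization identity for the Hilbert norm. Write $p\coloneqq\Proxrphi(x)$.

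First, by \eqref{eq:fermat-prox}, $r^{-1}(x-p)\in\partial\phi(p)$. Hence \eqref{eq:subgrad-prox} gives, for every $y\in X$,
\[
\phi(y)-\phi(p)\ge \tfrac1r\inp{x-p}{y-p}.
\]
If $y\notin\dom\phi$, the left-hand side of the claim is $+\infty$ and there is nothing to prove, so we may take $y\in\dom\phi$.

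Second, we handle the quadratic terms. Expand
\[
\|y-x\|^2=\|(y-p)+(p-x)\|^2=\|y-p\|^2+2\inp{y-p}{p-x}+\|p-x\|^2.
\]
This yields
\[
\tfrac1{2r}\|y-x\|^2-\tfrac1{2r}\|p-x\|^2=\tfrac1{2r}\|y-p\|^2-\tfrac1r\inp{x-p}{y-p}.
\]

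Finally, add this identity to the subgradient inequality. The inner-product terms cancel exactly, leaving
\[
\phi_x^{(r)}(y)-\phi_x^{(r)}(p)\ge\tfrac1{2r}\|y-p\|^2,
\]
which is the claim.

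There is no real obstacle. The only points needing care are the sign convention in the cross term, so that the cancellation is exact, and the trivial case $y\notin\dom\phi$. An alternative route is to note that $\phi_x^{(r)}$ is $\tfrac1r$-strongly convex with $0\in\partial\phi_x^{(r)}(p)$. That argument gives the same bound, but the direct computation above avoids invoking the subdifferential sum rule.
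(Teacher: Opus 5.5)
Your argument is correct, but it is organized differently from the paper's. The paper treats $\phi_x^{(r)}$ as a single $\tfrac{1}{r}$-strongly convex function. It notes that $0\in\partial\phi_x^{(r)}(\Proxrphi(x))$ simply because $\Proxrphi(x)$ minimizes it, and then applies the strong-convexity subgradient inequality with $\xi=0$. You instead split $\phi_x^{(r)}$ into its two pieces. Convexity of $\phi$ enters only through the subgradient $r^{-1}(x-p)\in\partial\phi(p)$ from \eqref{eq:subgrad-prox}, and the quadratic is handled by an exact identity, so in effect you prove the strong-convexity inequality by hand for this particular function. Your route gives an exact expression for the slack: for $y\in\dom\phi$, $\phi_x^{(r)}(y)-\phi_x^{(r)}(p)-\tfrac{1}{2r}\norm{y-p}^2=\phi(y)-\phi(p)-\tfrac{1}{r}\inp{x-p}{y-p}\ge 0$. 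The paper's route is shorter and applies verbatim to any strongly convex function at its minimizer.

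One correction to your closing remark: it is your route, not the paper's, that relies on the sum rule. The condition $0\in\partial\phi_x^{(r)}(p)$ follows from minimality alone. Your inclusion $r^{-1}(x-p)\in\partial\phi(p)$ is \eqref{eq:fermat-prox-a}, and its equivalence with minimality of $p$ is exactly the sum rule $\partial\phi_x^{(r)}(p)=\partial\phi(p)+r^{-1}(p-x)$, which holds here because the quadratic term is continuous. Since the paper states \eqref{eq:subgrad-prox} before the lemma, this does not affect the validity of your proof.
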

\begin{proof}
  Observe that $\phi_x^{(r)}$ is $(1/r)$-strongly convex. Hence, for every $z\in X$,
  \[
  \phi_x^{(r)}(y) \geq \phi_x^{(r)}(z) + \inner{\xi}{y-z} + \frac{1}{2r}\norm{y-z}^2,
  \]
for every $\xi\in \partial \phi_x^{(r)}(z)$. 
Since $\Proxrphi(x)$ minimizes $\phi_x^{(r)}(\cdot)$, $0\in \partial \phi_x^{(r)}(\Proxrphi(x))$. 
Choosing $z=\Proxrphi(x)$ and $\xi = 0$ concludes the proof.
\end{proof}
The next lemma demonstrates properties of the map  $r\mapsto \|\Proxrphi(x+rd)-x\|$.
\begin{lemma}\label{lem:prox.decrease}
    Let $x,d\in X$, $r\in\mathbb{R}_{++}$, and define the functions
\begin{equation}
\label{eq:psi}
  \varphi(r) =\|\Proxrphi(x+rd)-x\|
  \qquad\text{and}\qquad
  \psi(r) =\frac{1}{r}\varphi(r).
\end{equation}
Then, the mapping $r\mapsto \varphi(r)$ is nondecreasing and the mapping $r\mapsto\psi(r)$ is nonincreasing for $r>0$, i.e., if $r\geq t$ then $\varphi(r)\ge \varphi(t)$ and $\psi(r) \le \psi(t)$. 
Additionally, the inequalities are strict if $\Proxrphi(x+rd)\neq\Proxrphi(x+td)$.
\end{lemma}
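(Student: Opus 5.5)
Fix $r\ge t>0$ and write $p_r\coloneqq\Proxrphi(x+rd)$, $p_t\coloneqq\mathrm{Prox}_{t\phi}(x+td)$, $u\coloneqq p_r-x$, $v\coloneqq p_t-x$. The plan is to derive one monotonicity inequality relating $u$ and $v$, and then extract both monotonicity claims from it with Cauchy--Schwarz.

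\textbf{Step 1 (monotonicity inequality).} Apply \eqref{eq:subgrad-prox} twice: once to $p_r$ with $y=p_t$, and once to $p_t$ with $y=p_r$. Adding the two inequalities cancels the $\phi$ values and leaves
\[
\inp{r^{-1}(x+rd-p_r)-t^{-1}(x+td-p_t)}{p_t-p_r}\le 0 .
\]
The $d$ terms cancel, since $r^{-1}(rd)=d=t^{-1}(td)$. What remains is $\inp{t^{-1}v-r^{-1}u}{v-u}\le0$, which is equivalent to
\[
\tfrac1t\|v\|^2+\tfrac1r\|u\|^2\le\left(\tfrac1t+\tfrac1r\right)\inp{u}{v}.
\]

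\textbf{Step 2 (extracting the two monotonicities).} Bound $\inp{u}{v}\le\|u\|\|v\|=\varphi(r)\varphi(t)$ and set $a=\varphi(t)$, $b=\varphi(r)$. The inequality becomes
\[
\tfrac1t a^2+\tfrac1r b^2-\left(\tfrac1t+\tfrac1r\right)ab\le0,
\]
which factors as
\[
(a-b)\left(\tfrac{a}{t}-\tfrac{b}{r}\right)\le0 .
\]
So either $a\le b$ together with $a/t\ge b/r$, or $a\ge b$ together with $a/t\le b/r$.

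In the second case, $a\ge b$ and $r\ge t$ give $b/r\le b/t\le a/t$. Combined with $a/t\le b/r$, this forces equality throughout. Hence $a=b$ (if $a>b$ then $a/t>b/t\ge b/r$), and the first case holds as well. In either case $\varphi(t)\le\varphi(r)$ and $\psi(r)\le\psi(t)$. If $r=t$ the statement is trivial, so assume $r>t$ from here on.

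\textbf{Step 3 (strictness).} This is where I expect the real work. Assume $p_r\ne p_t$, i.e.\ $u\ne v$. Returning to Step 1, the quantity $\inp{t^{-1}v-r^{-1}u}{v-u}$ must be handled carefully.

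Suppose first that $\varphi(r)=\varphi(t)=:a$. Equality must then hold in the chain above, which forces $\inp{u}{v}=a^2$ with $\|u\|=\|v\|=a$. This gives $u=v$, a contradiction. So $\varphi(r)>\varphi(t)$.

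Next suppose $\psi(r)=\psi(t)$, i.e.\ $b/r=a/t$. The factored expression is then $0$, so $\inp{u}{v}=\|u\|\|v\|$ is again forced. The only escape is $a\ne b$ in an inequality that is otherwise tight; this needs a check. Concretely, plugging $b=(r/t)a$ into the Step 1 inequality gives
\[
\tfrac{a^2}{t}+\tfrac{r a^2}{t^2}\le\left(\tfrac1t+\tfrac1r\right)\inp{u}{v},
\]
while $\left(\tfrac1t+\tfrac1r\right)\|u\|\|v\|=\left(\tfrac1t+\tfrac1r\right)\tfrac{r}{t}a^2$, which is the same quantity. So Cauchy--Schwarz is tight, and $u=\tfrac{r}{t}v$ is parallel to $v$.

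To rule this out I would substitute $u=\tfrac rt v$ back into the exact identity $\inp{t^{-1}v-r^{-1}u}{v-u}\le0$. The first factor is $t^{-1}v-r^{-1}u=(t^{-1}-t^{-1})v=0$, so the inequality holds with equality and yields no contradiction. This is the main obstacle: collinear iterates with $\psi$ exactly constant are consistent with the monotonicity inequality alone. For example, with $\phi$ the indicator of a ray and $d$ pointing along it, the points $p_r=x+rd$ satisfy $p_r\neq p_t$ while $\psi(r)=\psi(t)=\|d\|$.

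So I expect to prove strictness fully for $\varphi$, and for $\psi$ only when $u$ is not a positive multiple $\tfrac rt v$. I would state that caveat, or interpret the strictness claim for $\psi$ accordingly.
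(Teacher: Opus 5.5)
Your proof is correct, and it supplies an argument the paper does not contain: the paper's proof is only a pointer to \cite[Lemma~2]{Baraldi2023}. Your route is self-contained and uses only \eqref{eq:subgrad-prox}. You add its two instances, which amounts to monotonicity of $\partial\phi$, to get $\frac1t\|v\|^2+\frac1r\|u\|^2\le(\frac1t+\frac1r)\langle u,v\rangle$. You then apply Cauchy--Schwarz and the factorization $(a-b)(\frac at-\frac br)\le 0$. The case analysis in Step~2 is sound. In Step~3, your observation that $\varphi(r)=\varphi(t)$ forces $\|u-v\|^2=\|u\|^2+\|v\|^2-2\langle u,v\rangle=0$ correctly gives strictness for $\varphi$.

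The obstacle you hit for $\psi$ is not a gap in your proof. It is an error in the statement: strict decrease of $\psi$ is false in general. Your ray example works, and an even simpler one is $\phi\equiv 0$ with $d\neq 0$. Then $\Proxrphi(x+rd)=x+rd$, so $\varphi(r)=r\|d\|$ is strictly increasing while $\psi(r)=\|d\|$ is constant, even though the prox points differ for every $r\neq t$.

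What your argument actually establishes is the correct sharpened statement. If $p_r\neq p_t$, then $\varphi(r)>\varphi(t)$, and $\psi(r)<\psi(t)$ unless $p_r-x=\frac rt(p_t-x)$. You should state the result that way rather than as a caveat.

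Nothing downstream in the paper breaks, because only the non-strict inequalities are ever invoked. They appear in the proof of the global convergence theorem and in the remark that $x_k=\bar x_k(\bar r)$ for one $\bar r$ implies $x_k=\bar x_k(r)$ for all $r$.
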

\begin{proof}
    See~\cite[Lemma~2]{Baraldi2023}.
\end{proof}

The next result, sometimes called the {\em descent lemma}, shows that if $r\in(0,2/L)$, then the proximal-gradient step
$$\Prox_{r\phi}(x-r\nabla f(x))-x$$ yields descent for the objective function $F(x)=f(x)+\phi(x)$.
This descent property is fundamental for extending the classical Cauchy-point construction used in traditional trust-region methods --- which is used for guaranteeing sufficient decrease at each iteration --- to the nonsmooth setting of \eqref{eq:optprob}.

\begin{lemma}[Descent Lemma]\label{lem:pg_descent}
Let Assumption~\ref{as:probdata} hold and fix $x\in X$ and $r\in(0,2/L)$.  Then the proximal gradient iterate
\[
x^+ \coloneqq \Prox_{r\phi}\big(x-r\nabla f(x)\big),
\]
satisfies \(F(x^+) \le F(x),\) with strict inequality when $x^+\neq x$.
\end{lemma}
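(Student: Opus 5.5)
The plan is to combine the standard quadratic upper bound for $L$-smooth functions with the subgradient inequality \eqref{eq:subgrad-prox} satisfied by the proximal point. One technical caveat comes first. The point $x^+$ lies in $\dom\phi$ because it minimizes $\phi_z^{(r)}$ with $z\coloneqq x-r\nabla f(x)$ and $\phi$ is proper. We may assume $x\in\dom\phi$, since otherwise $F(x)=+\infty$ and there is nothing to prove. The descent lemma bound
\[
f(y)\le f(x)+\inp{\nabla f(x)}{y-x}+\frac{L}{2}\|y-x\|^2
\]
is proved by integrating $\nabla f$ along the segment $[x,y]$. This segment lies in $\dom\phi$ by convexity, and hence in $U$, so the Lipschitz bound from \Cref{as:probdata} applies along it. Applying the bound with $y=x^+$ gives
\[
f(x^+)\le f(x)+\inp{\nabla f(x)}{x^+-x}+\frac{L}{2}\|x^+-x\|^2 .
\]

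Next, apply \eqref{eq:subgrad-prox} with $p=x^+$, with $z$ in place of $x$, and with test point $y=x$. This gives
\[
\phi(x)\ge\phi(x^+)+\inp{r^{-1}(z-x^+)}{x-x^+}.
\]
Substituting $z-x^+=(x-x^+)-r\nabla f(x)$ turns this into
\[
\phi(x^+)\le \phi(x)-\frac1r\|x-x^+\|^2-\inp{\nabla f(x)}{x^+-x}.
\]
Adding this to the smoothness bound, the linear terms in $\nabla f(x)$ cancel and we obtain
\[
F(x^+)\le F(x)-\Big(\frac1r-\frac L2\Big)\|x^+-x\|^2 .
\]
Since $r<2/L$, the coefficient $\frac1r-\frac L2$ is strictly positive. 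Therefore $F(x^+)\le F(x)$, and the inequality is strict whenever $x^+\neq x$.

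I expect the only delicate step to be justifying the quadratic upper bound. The hypothesis gives Lipschitz continuity of $\nabla f$ only on the open set $U\supset\dom\phi$, so one must check that the whole segment $[x,x^+]$ stays where that hypothesis holds. Convexity of $\dom\phi$ settles this. The rest is the algebra above. Alternatively, one could instead use \Cref{lem:prox-dec} with $y=x$, which even yields the sharper constant $\frac{3}{2r}-\frac L2$ in place of $\frac1r-\frac L2$, but the subgradient inequality suffices.
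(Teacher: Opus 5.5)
Your proof is correct and is essentially the paper's argument: you add the $L$-smoothness upper bound at $x^+$ to \eqref{eq:subgrad-prox} tested at $y=x$, so the $\nabla f(x)$ terms cancel; your check that $[x,x^+]\subset\dom\phi\subset U$ is a welcome detail the paper leaves implicit. One correction to your closing aside: \Cref{lem:prox-dec} with $y=x$ expands to exactly the same inequality as \eqref{eq:subgrad-prox}, so it again gives the constant $\frac1r-\frac L2$, not $\frac{3}{2r}-\frac L2$; the latter already fails for $\phi\equiv0$ and $f=\frac L2\|\cdot\|^2$, where the bound with $\frac1r-\frac L2$ holds with equality.
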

\begin{proof}
Since $f$ is $L$-smooth by Assumption~\ref{as:probdata}, it satisfies
\[
f (x^+) \leq f (x) + \inp{\nabla f(x)}{x^+-x} + \frac{L}{2}\|x^+-x\|^2.
\]
Moreover, by inequality~\eqref{eq:subgrad-prox}, we have
\[
\phi(x) \geq \phi(x^+) + \inner{\frac{x-r\nabla f(x)-x^+}{r}}{x-x^+}.
\]
Adding the two previous inequalities and simplifying yields
\begin{align*}
    F(x^+)\leq F(x) + \left(\frac{L}{2}-\frac{1}{r}\right)\|x-x^+\|^2.
\end{align*}
The result then follows from our choice of $r$, which ensures that $(L/2-1/r)<0$.
\end{proof}

\subsection{Inexact Proximity Operator}
\label{subsec:inexact.subgradient.proximal}
There are various notions of inexact proximity operators.
We follow \cite{salzo2012inexact} and define three approximations based on relaxations of the equivalent expressions in \eqref{eq:fermat-prox}.
\begin{definition}[Approximate Proximity Operator]
\label{def:type1-error}
Fix $x\in X$, $r\in\mathbb{R}_{++}$, $\varepsilon\in\mathbb{R}_+$ and define $\tau=\tau(\varepsilon)\coloneqq \varepsilon^2/(2r)$.
\begin{itemize}[align=left, widest={Type 3:}, leftmargin=*]
\item[\bf Type 1:] $u\in X$ is a Type-1 approximation of $\Proxrphi(x)$, denoted $u\approx_1\Proxrphi(x)$, with $\varepsilon$-precision if
  \begin{equation}\label{eq:type1-error}
    0\in \partial_{\tau}\phi_x^{(r)}(u)
  \end{equation}
\item[\bf Type 2:] $u\in X$ is a Type-2 approximation of $\Proxrphi(x)$, denoted $u\approx_2\Proxrphi(x)$, with $\varepsilon$-precision if
  \begin{equation}\label{eq:type2-error}
    \frac{1}{r}(x-u) \in \partial_{\tau}\phi(u)
  \end{equation}
\item[\bf Type 3:] $u\in X$ is a Type-3 approximation of $\Proxrphi(x)$, denoted $u\approx_3\Proxrphi(x)$, with $\varepsilon$-precision if
 \begin{equation}\label{eq:type3-error}
   \inf_{\xi\in\partial\phi_x^{(r)}(u)}\|\xi\| \le \frac{\varepsilon}{r},
    \end{equation}
    where by convention the infimum is $+\infty$ whenever $\partial\phi_x^{(r)}(u)=\emptyset$.
    Note that the infimum is attained when $\partial\phi_x^{(r)}(u)\neq\emptyset$ since $\partial\phi_x^{(r)}(u)$ is closed and convex, cf.\ \cite[Theorem~3.16~\&~Proposition~16.4]{bauschke2017monotone}.
\end{itemize}
\end{definition}
Notice that the Type-1, 2, and 3 approximations arise by relaxing  \eqref{eq:fermat-prox-a} in various ways.
To better understand the Type-2 approximation, we note that the inclusion \eqref{eq:type2-error} is equivalent to
\[
x\in u + r\partial_{\tau}\phi(u) \iff u \in (I+r\partial_{\tau}\phi)^{-1}(x)
\]
and therefore, Type-2 approximations are resolvents of $\partial_\tau\phi$, which are set valued in general.
This is analogous to the relationship \eqref{eq:fermat-prox-b}.
Moreover, the following sequence of equivalences are particularly useful for understanding Type-3 approximations.
By definition, we have that $u\approx_3\Proxrphi(x)$ with $\varepsilon$-precision if and only if $\partial\phi_x^{(r)}(u)\neq\emptyset$ and
\begin{align*}
  \exists\, e\in \frac{\varepsilon}{r}\mathbb{B}\cap \partial\phi_x^{(r)}(u) &\iff \exists\, e\in \varepsilon\mathbb{B}\quad\text{such that}\quad(x+e-u)/r\in\partial\phi(u) \\
    &\iff \exists\,e\in \varepsilon\mathbb{B}\quad\text{such that}\quad u = \Proxrphi(x+e).
\end{align*}
Consequently, Type-2 approximations arise from enlarging $\partial\phi$, while Type-3 approximations arise from perturbing the argument $x$.

In the subsequent sections, we prove convergence of our trust-region algorithm when using Type-1 approximations because they are the most general of the three.
In particular, the next result demonstrates that both Type-2 and 3 approximations yield Type-1 approximations. 
\begin{proposition}\label{prop:salzo.error.equival}
    Let $x\in X$ and $r,\,\varepsilon\in\R_{++}$. The following hold:
    \begin{enumerate}
        \item If $u\approx_2\Proxrphi(x)$ with $\varepsilon$-precision, then $u\approx_1\Proxrphi(x)$ with $\varepsilon$-precision.
        \item If $u\approx_3\Proxrphi(x)$ with $\varepsilon$-precision, then $u\approx_1\Proxrphi(x)$ with $\varepsilon$-precision.
    \end{enumerate}
\end{proposition}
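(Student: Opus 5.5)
Both parts reduce to the defining inequality of $0\in\partial_\tau\phi_x^{(r)}(u)$ with $\tau=\varepsilon^2/(2r)$, namely
\[
\phi_x^{(r)}(y)\ge \phi_x^{(r)}(u)-\tau \quad\forall\, y\in X .
\]
The plan is to derive this inequality directly from the Type-2 and Type-3 conditions. In each case we add the appropriate subgradient inequality for $\phi$ to the exact quadratic identity
\begin{equation*}
\frac{1}{2r}\|y-x\|^2-\frac{1}{2r}\|u-x\|^2=\frac{1}{r}\inp{u-x}{y-u}+\frac{1}{2r}\|y-u\|^2 .
\end{equation*}

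\emph{Part 1 (Type 2).} Assume $\frac1r(x-u)\in\partial_\tau\phi(u)$. By \eqref{eq:subgradient.inexact}, for every $y$,
\[
\phi(y)\ge\phi(u)+\frac1r\inp{x-u}{y-u}-\tau .
\]
Adding this to the identity above, the inner-product terms cancel. We obtain
\[
\phi_x^{(r)}(y)\ge\phi_x^{(r)}(u)+\frac{1}{2r}\|y-u\|^2-\tau\ge\phi_x^{(r)}(u)-\tau ,
\]
which is exactly $0\in\partial_\tau\phi_x^{(r)}(u)$. This step is routine.

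\emph{Part 2 (Type 3).} By the equivalences following Definition~\ref{def:type1-error}, there exists $e\in\varepsilon\mathbb{B}$ with $\frac1r(x+e-u)\in\partial\phi(u)$. By \eqref{eq:subgradient.exact},
\[
\phi(y)\ge\phi(u)+\frac1r\inp{x+e-u}{y-u}\quad\forall\, y\in X .
\]
Adding the same quadratic identity gives
\[
\phi_x^{(r)}(y)-\phi_x^{(r)}(u)\ge \frac1r\inp{e}{y-u}+\frac{1}{2r}\|y-u\|^2 .
\]
This is the one step that needs care: the linear error term $\frac1r\inp{e}{y-u}$ must be absorbed by the quadratic term. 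Minimizing the right-hand side over $w=y-u$ gives the minimizer $w=-e$ and the value $-\frac{1}{2r}\|e\|^2$. Equivalently, Young's inequality gives
\[
\frac1r\inp{e}{w}\ge -\frac{1}{2r}\|e\|^2-\frac{1}{2r}\|w\|^2 .
\]
Either way, the right-hand side is bounded below by $-\frac{1}{2r}\|e\|^2\ge-\frac{\varepsilon^2}{2r}=-\tau$. This again yields $0\in\partial_\tau\phi_x^{(r)}(u)$, and the constant matches the choice $\tau=\varepsilon^2/(2r)$ in Definition~\ref{def:type1-error} exactly.

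Neither part requires an appeal to subdifferential sum rules for $\phi_x^{(r)}$. Everything is verified pointwise from the definitions, so no qualification conditions are needed even when $\dom\phi$ is a proper subset of $X$.
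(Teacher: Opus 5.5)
Your proof is correct. The paper gives no argument of its own here and simply defers to \cite[Proposition~1]{salzo2012inexact}, so your proposal is a self-contained, elementary replacement for that citation.

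It helps to read your computation in $\varepsilon$-subdifferential language. Set $q\coloneqq\frac{1}{2r}\|\cdot-x\|^2$ and $\tau=\varepsilon^2/(2r)$. Your minimization over $w=y-u$ is exactly the statement $\partial_\tau q(u)=\frac1r(u-x+\varepsilon\mathbb{B})$. Your ``add the two inequalities'' step is the elementary inclusion $\partial_{\tau_1}\phi(u)+\partial_{\tau_2}q(u)\subseteq\partial_{\tau_1+\tau_2}\phi_x^{(r)}(u)$. In this view, Type~2 is the split $(\tau_1,\tau_2)=(\tau,0)$ and Type~3 is the split $(0,\tau)$. That explains why the same $\tau$ appears in both parts, and it shows at once that errors divided between $\phi$ and the quadratic also yield Type-1 points.

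Your direct route buys elementarity and a little extra information. In Part~1 you keep the term $\frac{1}{2r}\|y-u\|^2$. Taking $y=\Proxrphi(x)$ there and adding Lemma~\ref{lem:prox-dec} at $y=u$ gives $\|u-\Proxrphi(x)\|\le\varepsilon/\sqrt{2}$ for Type-2 points, which is sharper than Lemma~\ref{lem:inexac.exact.prox.diff}.

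One small correction to your closing remark: Part~2 does rely on a sum rule. The equivalence you invoke uses $\partial\phi_x^{(r)}(u)=\partial\phi(u)+\frac1r(u-x)$. This is harmless, since it needs no qualification condition (the quadratic is finite, convex and differentiable everywhere). If you want to avoid it literally, pick $\xi\in\partial\phi_x^{(r)}(u)$ with $\|\xi\|\le\varepsilon/r$; the infimum is attained, as the paper notes. Then use the strong-convexity inequality $\phi_x^{(r)}(y)\ge\phi_x^{(r)}(u)+\inp{\xi}{y-u}+\frac{1}{2r}\|y-u\|^2$ from the proof of Lemma~\ref{lem:prox-dec}. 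Your absorption step then goes through verbatim with $e=r\xi$.
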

\begin{proof}
See \cite[Proposition~1]{salzo2012inexact}.
\end{proof}

In addition to the connections between Type-1, 2 and 3, we can quantify the error for the Type-1 approximation as done in the following lemma.
\begin{lemma}\label{lem:inexac.exact.prox.diff}
    For a given $x\in X$ and $r\in \R_{++}$, if $u\approx_1 \Proxrphi(x)$ with $\varepsilon$-precision, then
    \begin{equation}
    \|u-\Proxrphi(x)\| \le \varepsilon
    \end{equation}
\end{lemma}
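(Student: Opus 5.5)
The plan is to combine the definition of a Type-1 approximation with the quadratic growth of $\phi_x^{(r)}$ around its minimizer from \Cref{lem:prox-dec}. Write $p\coloneqq\Proxrphi(x)$ and $\tau=\varepsilon^2/(2r)$.

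First, unpack the inclusion $0\in\partial_\tau\phi_x^{(r)}(u)$ using the definition \eqref{eq:subgradient.inexact} of the $\tau$-subdifferential, applied to the function $\phi_x^{(r)}$. It says that $\phi_x^{(r)}(y)\ge\phi_x^{(r)}(u)-\tau$ for all $y\in X$. In other words, $u$ is a $\tau$-minimizer of $\phi_x^{(r)}$. Taking $y=p$ gives
\[
\phi_x^{(r)}(u)-\phi_x^{(r)}(p)\le\tau.
\]
In passing, this also shows that $u\in\dom\phi$. Indeed, $\phi_x^{(r)}(p)$ is finite because $\phi$ is proper, so $\phi_x^{(r)}(u)$ must be finite as well.

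Second, apply \Cref{lem:prox-dec} with $y=u$, which gives
\[
\phi_x^{(r)}(u)-\phi_x^{(r)}(p)\ge\frac{1}{2r}\|u-p\|^2.
\]

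Chaining the two inequalities yields
\[
\frac{1}{2r}\|u-p\|^2\le\tau=\frac{\varepsilon^2}{2r}.
\]
Multiplying by $2r$ and taking square roots gives $\|u-p\|\le\varepsilon$, which is the claim.

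There is no real obstacle here. The only point needing care is to read the $\varepsilon$-subgradient inequality with the subgradient $0$, so that it becomes an approximate-minimality statement. One also has to note that the value comparison is well defined, which holds because $\phi$ is proper and hence both function values are finite.
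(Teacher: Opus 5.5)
Your proof is correct and matches the paper's argument. The Type-1 inclusion $0\in\partial_\tau\phi_x^{(r)}(u)$, tested at $y=\Proxrphi(x)$, gives $\phi_x^{(r)}(u)\le\phi_x^{(r)}(\Proxrphi(x))+\varepsilon^2/(2r)$; combining this with the quadratic growth from \Cref{lem:prox-dec} yields $\|u-\Proxrphi(x)\|\le\varepsilon$, and your remark that $u\in\dom\phi$ just makes explicit a detail the paper leaves implicit.
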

\begin{proof}
    The inclusion \eqref{eq:type1-error} ensures that
    \[
    \phi_x^{(r)}(u) \leq \phi_x^{(r)}(\Proxrphi(x)) + \varepsilon^2/(2r),
    \]
    which when combined with \Cref{lem:prox-dec} yields the desired bound.
\end{proof}

To build on the intuition behind Type-3 approximations, we present a connection with another notion of approximate subgradient called the $\delta$-Fr\'{e}chet subgradient.
One reason for doing this is that $\delta$-Fr\'{e}chet subgradients are often simpler to compute in practice than classical approximate subgradients.
\begin{definition}[$\delta$-Fr{\'e}chet Subgradient]
\label{def:delta.Frechet}
For $x\in \dom\; \phi$ and $\delta\in\R_{+}$, the $\delta$-Fr\'echet subdifferential of $\phi$ at $x$ is
\[
\hat{\partial}_\delta\phi(x)\coloneqq \{ \xi\in X\,\vert\,\phi(y) \geq \phi(x) + \inner{\xi}{y-x} - \delta \|y-x\| \;\; \forall y\in X\}.
\]
According to \cite[Proposition~1.34]{kruger2004frechet}, the $\delta$-Fr\'{e}chet subdifferential satisfies
\[
  \hat{\partial}_\delta\phi(x) = \partial[\phi+\delta\|\cdot-x\|](x) = \partial\phi(x)+\delta\mathbb{B}.
\]
\end{definition}

In the next result, we demonstrate the connection between the $\delta$-Fr\'echet subdifferential and Type-3 approximation.
\begin{lemma}\label{lem:delta-prox-char}
  Let $x,\, u\in X$, $\delta\in\R_+$ and $r\in\R_{++}$, then the following statements are equivalent:
  \begin{enumerate}\itemsep2pt
    \item $0\in\hat{\partial}_{\delta}\phi_x^{(r)}(u)$;\label{lem:delta-prox-char1}
    \item $\inf_{\xi\in\partial\phi_x^{(r)}(u)}\|\xi\|\le\delta$;\label{lem:delta-prox-char2}
    \item $\exists\,\xi\in \delta\mathbb{B}$ such that $(x-u)/r + \xi\in\partial\phi(u)$;\label{lem:delta-prox-char3}
    \item $\exists\,\xi\in\delta\mathbb{B}$ such that $u=\Proxrphi(x+r\xi)$. \label{lem:delta-prox-char4}
  \end{enumerate}
\end{lemma}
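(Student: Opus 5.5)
I will prove the equivalences as a chain $1\Leftrightarrow 2\Leftrightarrow 3\Leftrightarrow 4$, relying mainly on the characterization $\hat{\partial}_\delta g(u)=\partial g(u)+\delta\mathbb{B}$ from \Cref{def:delta.Frechet}, applied with $g=\phi_x^{(r)}$, together with the subdifferential sum rule for $\phi_x^{(r)}$ and the Fermat characterization \eqref{eq:fermat-prox}.

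\emph{Step 1 ($1\Leftrightarrow 2$).} Since $\phi_x^{(r)}$ is proper, closed and convex, the cited identity gives $\hat{\partial}_\delta\phi_x^{(r)}(u)=\partial\phi_x^{(r)}(u)+\delta\mathbb{B}$. Hence $0\in\hat{\partial}_\delta\phi_x^{(r)}(u)$ holds if and only if there is $\xi\in\partial\phi_x^{(r)}(u)$ with $\|\xi\|\le\delta$. When $\partial\phi_x^{(r)}(u)\neq\emptyset$, this set is closed and convex, so the infimum of the norm is attained (as noted after \Cref{def:type1-error}). Thus the existence of such a $\xi$ is equivalent to $\inf_{\xi\in\partial\phi_x^{(r)}(u)}\|\xi\|\le\delta$. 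If the subdifferential is empty, both statements fail, using the convention that the infimum is $+\infty$. The one point to check is that the identity applies at $u$: both sides are empty when $u\notin\dom\phi$. This is the main subtlety, because \Cref{def:delta.Frechet} is stated only for points of the domain. I would handle $u\notin\dom\phi$ separately and directly: then $\phi_x^{(r)}(u)=+\infty$, so both the $\delta$-Fréchet and the convex subdifferentials are empty under the convention.

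\emph{Step 2 ($2\Leftrightarrow 3$).} The quadratic $\frac{1}{2r}\|\cdot-x\|^2$ is continuous and differentiable everywhere, so the exact sum rule (Moreau--Rockafellar) gives $\partial\phi_x^{(r)}(u)=\partial\phi(u)+\frac1r(u-x)$. Therefore some element of $\partial\phi_x^{(r)}(u)$ has norm at most $\delta$ if and only if there is $\eta\in\partial\phi(u)$ with $\xi:=\eta-(x-u)/r\in\delta\mathbb{B}$. Because $\mathbb{B}=-\mathbb{B}$, we may replace $\xi$ by $-\xi$, which is the same as saying $(x-u)/r+\xi\in\partial\phi(u)$ for some $\xi\in\delta\mathbb{B}$. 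The attainment of the infimum from Step 1 is what makes this existence statement equivalent to item 2.

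\emph{Step 3 ($3\Leftrightarrow 4$).} Rewrite item 3 as $\frac1r\big((x+r\xi)-u\big)\in\partial\phi(u)$, that is, $0\in\frac1r\big(u-(x+r\xi)\big)+\partial\phi(u)$. By \eqref{eq:fermat-prox} applied at the point $x+r\xi$, this holds if and only if $u=\Proxrphi(x+r\xi)$.

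I expect all three steps to be routine. The only delicate points are the empty-subdifferential and out-of-domain conventions, together with the attainment of the infimum used in Steps 1 and 2.
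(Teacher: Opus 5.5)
Your proof is correct. It uses the same three ingredients as the paper: the identity $\hat{\partial}_\delta g=\partial g+\delta\mathbb{B}$, the sum rule for the quadratic term, and the resolvent form of Fermat's rule. The difference is in how you arrange them.

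The paper pivots on item 3. It obtains $1\Leftrightarrow 3$ from a Fr\'echet sum rule with the smooth quadratic, followed by the identity applied to $\phi$, and it proves $2\Leftrightarrow 3$ directly. You instead apply the identity to the convex function $\phi_x^{(r)}$ itself, which gives $1\Leftrightarrow 2$ at once. You then need only the convex Moreau--Rockafellar rule for $2\Leftrightarrow 3$.

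What your route buys is that it stays entirely within convex subdifferential calculus. It also makes explicit two points the paper uses silently: attainment of the minimal-norm element in the reverse direction of $2\Leftrightarrow 3$, and the cases $\partial\phi_x^{(r)}(u)=\emptyset$ or $u\notin\dom\phi$.

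One small cleanup: in Step 2 you do not need $\mathbb{B}=-\mathbb{B}$. With $\xi:=\eta-(x-u)/r$ you already have $(x-u)/r+\xi=\eta\in\partial\phi(u)$.
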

\begin{proof}
  We first note that \crefpart{lem:delta-prox-char}{lem:delta-prox-char3} and \crefpart{lem:delta-prox-char}{lem:delta-prox-char4} are clearly equivalent since
  \[
    (x-u)/r+\xi\in\partial\phi(u)
    \iff x+r\xi\in (I+r\partial\phi)(u)\iff u=\Proxrphi(x+r\xi).
  \]
  Now, the equivalence between \crefpart{lem:delta-prox-char}{lem:delta-prox-char3} and \crefpart{lem:delta-prox-char}{lem:delta-prox-char1} follows from the fact that
  \[
   \hat{\partial}_\delta \phi_x^{(r)}(u) = (u-x)/r + \hat{\partial}_\delta \phi(u) = (u-x)/r + \partial\phi(u) + \delta\mathbb{B}
  \]
\cite[Lemma~17.1~\&~Corollary~17.3]{clason2023introduction}.
  Finally, to prove equivalence of \crefpart{lem:delta-prox-char}{lem:delta-prox-char3} and \crefpart{lem:delta-prox-char}{lem:delta-prox-char2}, 
  we note that \crefpart{lem:delta-prox-char}{lem:delta-prox-char3} is equivalent to
  \[
    \exists\, \xi_1\in\partial\phi(u),\;\;\xi_2\in\delta\mathbb{B} \quad\text{such that}\quad
    (x-u)/r = \xi_1-\xi_2.
  \]
  Rearranging terms yields
  \[
    \xi_1 = (x-u)/r + \xi_2\in\partial\phi(u).
  \]
  Consequently,
  \[
   \inf_{\xi\in\partial\phi_x^{(r)}(u)}\|\xi\| = \inf_{\xi\in\partial\phi(u)}\|(u-x)/r+\xi\|
   \le \|(u-x)/r+\xi_1\| = \|\xi_2\| \le \delta,
  \]
  proving the implication \crefpart{lem:delta-prox-char}{lem:delta-prox-char1} $\implies$ \crefpart{lem:delta-prox-char}{lem:delta-prox-char2}. 
  To prove the reverse implication, \crefpart{lem:delta-prox-char}{lem:delta-prox-char2} is equivalent to 
  \[
    \exists\, \xi\in\partial\phi(u) \qquad\text{such that}\qquad
    \|(x-u)/r - \xi\| \le \delta,
  \]
  and so
  \[
    (x-u)/r = [(x-u)/r - \xi] + \xi \in \partial\phi(u) + \delta\mathbb{B} = \hat{\partial}_\delta\phi(u)
  \]
  as desired.
\end{proof}
It is clear that the $u$ in \Cref{lem:delta-prox-char} satisfies $u\approx_3 \Proxrphi(x)$ with $\varepsilon$-precision if $\delta=\varepsilon/r$.
Moreover, \Cref{lem:delta-prox-char} demonstrates that Type-3 approximations are Type-1 and 2 approximations with the classical $\tau$-subdifferentials $\partial_\tau\phi_x^{(r)}(u)$ and $\partial_\tau\phi(u)$ replaced by the $\delta$-Fr\'{e}chet subdifferential.
Our final result of this section plays an important role in the global convergence analysis of our trust-region method.
In particular, this result allows us to prove descent of the inexact proximal gradient step.
\begin{lemma}\label{lem:decr.technical}
Let $x,g\in X$, $r\in\R_{++}$ and $\varepsilon\in\R_+$, and suppose $u\approx_1\Proxrphi(x-rg)$ with $\varepsilon$-precision.
\begin{enumerate}
  \item It holds that
  \begin{equation}\label{lem:decr.technical.a}
    \inner{g}{u-x}+\phi(u) - \phi(x) \leq -\frac{1}{2r}\|u-x\|^2 + \frac{\varepsilon^2}{2r}.
\end{equation}
\label{lem:decr.technical.1}
\item Let $\kappa\in(0,\tfrac12)$ be fixed and suppose $x\neq\Proxrphi(x-rg)$.  
If
\begin{equation}\label{lem:decr.technical.a.b1}
  0 < \varepsilon \le \kappa\|\Proxrphi(x-rg)-x\|,
\end{equation}
then
\[
\inner{g}{u-x}+\phi(u) - \phi(x) \le -\frac{1-2\kappa}{2r(1-\kappa)^2}\|u-x\|^2.
\]
\label{lem:decr.technical.2}
\end{enumerate}
\end{lemma}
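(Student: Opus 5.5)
Part 1 follows from the Type-1 definition tested at $y=x$. The inclusion $0\in\partial_\tau\phi_{x-rg}^{(r)}(u)$ with $\tau=\varepsilon^2/(2r)$ means $\phi_{x-rg}^{(r)}(y)\ge\phi_{x-rg}^{(r)}(u)-\tau$ for all $y\in X$. Taking $y=x$ gives
\[
\phi(x)+\frac{r}{2}\|g\|^2 \ge \phi(u)+\frac{1}{2r}\|u-x+rg\|^2-\frac{\varepsilon^2}{2r}.
\]
Expanding $\|u-x+rg\|^2=\|u-x\|^2+2r\inner{g}{u-x}+r^2\|g\|^2$, the $\tfrac r2\|g\|^2$ terms cancel. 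Rearranging then yields \eqref{lem:decr.technical.a} directly.

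For Part 2, I would start from \eqref{lem:decr.technical.a} and absorb the error term $\varepsilon^2/(2r)$ into the quadratic. To do this I need a lower bound on $\|u-x\|$ in terms of $\varepsilon$. Set $p=\Proxrphi(x-rg)$. By \Cref{lem:inexac.exact.prox.diff}, $\|u-p\|\le\varepsilon$. The triangle inequality then gives $\|u-x\|\ge\|p-x\|-\varepsilon$. Using \eqref{lem:decr.technical.a.b1}, $\|p-x\|\ge\varepsilon/\kappa$, so
\[
\|u-x\|\ge \frac{\varepsilon}{\kappa}-\varepsilon=\frac{1-\kappa}{\kappa}\,\varepsilon,
\qquad\text{i.e.}\qquad \varepsilon\le \frac{\kappa}{1-\kappa}\|u-x\|.
\]
Here $\kappa<1/2$ guarantees $(1-\kappa)/\kappa>1>0$, so the bound is meaningful.

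Substituting $\varepsilon^2\le \kappa^2(1-\kappa)^{-2}\|u-x\|^2$ into \eqref{lem:decr.technical.a} gives the coefficient
\[
-\frac{1}{2r}\Bigl(1-\frac{\kappa^2}{(1-\kappa)^2}\Bigr)=-\frac{(1-\kappa)^2-\kappa^2}{2r(1-\kappa)^2}=-\frac{1-2\kappa}{2r(1-\kappa)^2}.
\]
This is exactly the claimed constant, and it is negative since $\kappa<1/2$.

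The only delicate step is the lower bound on $\|u-x\|$. It hinges on combining \Cref{lem:inexac.exact.prox.diff} with the relative-accuracy condition \eqref{lem:decr.technical.a.b1}, and on keeping the direction of the triangle inequality straight. The hypotheses $x\neq p$ and $\varepsilon>0$ only ensure the condition is nonvacuous and play no further role. Everything else is algebra.
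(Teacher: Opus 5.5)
Your proposal is correct and follows essentially the same route as the paper. Part 1 evaluates the Type-1 $\tau$-subgradient inequality for $\phi_{x-rg}^{(r)}$ at $y=x$ and expands the quadratic. Part 2 combines \Cref{lem:inexac.exact.prox.diff} with the triangle inequality to obtain $\varepsilon\le\frac{\kappa}{1-\kappa}\|u-x\|$, and then substitutes this bound into \eqref{lem:decr.technical.a}.
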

\begin{proof}
Let $\bar x=x-rg$.
To prove \crefpart{lem:decr.technical}{lem:decr.technical.1}, it follows from \eqref{eq:type1-error} that 
    \[
    \frac{1}{2r}\|u-\bar x\|^2 + \phi(u) \leq \frac{1}{2r}\|x-\bar x\|^2 + \phi(x) + \frac{\varepsilon^2}{2r}.
    \]
    Upon expanding the quadratic terms on the left-hand side and rearranging the previous inequality, we see that \eqref{lem:decr.technical.a} holds.
For \crefpart{lem:decr.technical}{lem:decr.technical.2}, the triangle inequality and \Cref{lem:inexac.exact.prox.diff} yield
\begin{equation}\label{proof.Q.decrease.1}
\|\Proxrphi(\bar x)-x\| \le \|\Proxrphi(\bar x)-u\| + \|u-x\| \le \varepsilon + \|u-x\|,
\end{equation}
which, when combined with the bound for $\varepsilon$ in \eqref{lem:decr.technical.a.b1}, implies
    \(
    \varepsilon \leq \left(\frac{\kappa}{1-\kappa}\right)\|u-x\|.
    \)
    Combining this bound with \eqref{lem:decr.technical.a}, we obtain
    \begin{align*}
        \inner{g}{u-x}+\phi(u) - \phi(x) \leq -\frac{1}{2r}\|u-x\|^2 + \frac{\varepsilon^2}{2r} \le -\frac{1-2\kappa}{2r(1-\kappa)^2}\|u-x\|^2,
    \end{align*}
    which concludes the proof.
\end{proof}
Notice that \Cref{lem:decr.technical} enables us to prove an analogous result as \Cref{lem:pg_descent} for Type-1 proximal gradient approximations.
In particular, for $u$ as in \Cref{lem:decr.technical} with $g=\nabla f(x)$, if \eqref{lem:decr.technical.a.b1} holds and $0 < r < \frac{1-2\kappa}{L(1-\kappa)^2}$, then $F(u) < F(x)$ when $u\neq x$.
In the subsequent sections, we will leverage this type of result to prove that the benchmark Cauchy point, along the inexact proximal gradient path, produces sufficient decrease, therefore ensuring that our trust-region method is globally convergent as in \cite{Baraldi2023}.

\section{A Proximal Trust{-}Region Method}
\label{sec:trust}

In this section, we describe \cite[Algorithm~1]{Baraldi2023} and present modifications, where needed, to account for inexact evaluations of the proximity operator.
Let $x_k$ denote the $k$-th iteration of the trust-region method and $g_k$ an approximation of the gradient $\nabla f(x_k)$.
There are two key quantities, where inexact proximity operator evaluations play a critical role: {\em (i)} the computation of the Cauchy point as a point along the proximal gradient path
\begin{equation}\label{eq:path}
  p_k(r)\coloneqq x_k(r)-x_k \quad\text{for} \quad r\in\R_{++},
\end{equation}
where $x_k(r)\approx_1\Proxrphi(x_k-rg_k)$ with $\varepsilon_k$-precision and {\em (ii)} the evaluation of the proximal gradient stationarity metric
\begin{equation}
    \label{eq:psik}
    \psi_k(r)\coloneqq \frac{1}{r}\|p_k(r)\| \qquad\text{and}\qquad
    \Psi_k \coloneqq \psi_k(r_k),
\end{equation}
where $r_k\in\R_{++}$ is the Cauchy point step length.
For notational convenience, we denote $\bar{x}_k(r)=\Proxrphi(x_k-rg_k)$ and define $\bar{p}_k(r)$, $\bar{\psi}_k(r)$ and $\bar{\Psi}_k$ analogously to $p_k(r)$, $\psi_k(r)$and $\Psi_k$.
Recall that if there exists $\bar{r}\in\R_{++}$ for which $x_k=\bar{x}_k(\bar{r})$, then $x_k=\bar{x}_k(r)$ for all $r\in\R_{++}$, owing to the monotonicity properties of $r\mapsto r\bar\psi_k(r)$ and $r\mapsto \bar\psi_k(r)$ \cite[Lemma~2]{Baraldi2023}.

Trust{-}region methods are iterative procedures for computing approximate solutions to general nonconvex optimization problems \cite{conn2000trust} and are ideal for leveraging and exploiting inexact computations \cite{Baraldi2023,kouri2018inexact}.
In the recent work \cite{Baraldi2023}, the authors developed a proximal trust-region method for nonsmooth optimization problems with the form \eqref{eq:optprob} that rigorously handles inexact evaluations of $F$ and the gradient $\nabla f$ with guaranteed convergence.
At the $k$-th iteration of \cite[Algorithm~1]{Baraldi2023}, we compute a trial iterate $x_k^+$ that {\em approximately solves} the trust-region subproblem
\begin{equation}\label{eq:tr-sub}
  \min_{x\in X}\{m_k(x)\coloneqq f_k(x)+\phi(x)\} \qquad \text{subject to}\qquad \|x-x_k\|\le \Delta_k,
\end{equation}
where $x_k \in \textup{dom}\,\phi$ is the current iterate, $f_k$ is a smooth local model of $f$ around $x_k$, and $\Delta_k > 0$ is the trust-region radius.
Although general nonlinear models $f_k$ are possible, it is often numerically convenient to employ the quadratic model
\[
  f_k(x) = \frac{1}{2}\inp{x-x_k}{B_k(x-x_k)}+\inp{g_k}{x-x_k},
\]
where $B_k:X\to X$ is a continuous self-adjoint linear operator and $g_k\approx\nabla f(x_k)$.
As such, we will restrict our attention to quadratic $f_k$.
When saying that $x_k^+$ {\em approximately solves} \eqref{eq:tr-sub}, we mean that the trial iterate $x_k^+$ is feasible with respect to the trust-region constraint up to a scaling of the trust-region radius and satisfies the fraction of Cauchy decrease condition. That is, there exist positive constants $\krad$ and $\kfcd$, independent of $k$, such that
\begin{subequations}\label{eq:FCD.condition}
\begin{align}
  \|x_k^+-x_k\| &\le \krad \Delta_k \label{eq:FCD.condition.1}\\
  m_k(x_k)-m_{k}(x_k^+) &\ge \kfcd \Psi_k\min\left\{\frac{\Psi_k}{1+\|B_k\|},\Delta_k\right\} \label{eq:FCD.condition.2},
\end{align}
\end{subequations}
where $\|B_k\|$ denotes the operator norm of $B_k$.
See \cite{Baraldi2023} for the convergence analysis when more general models $f_k$ are used.

Given a trial iterate $x_k^+$ that satisfies \eqref{eq:FCD.condition}, 
we decide whether to accept or reject $x_k^+$ based on the ratio of computed reduction 
\[
  \cred_k\approx\ared_k\coloneqq F(x_k)-F(x_k^+)
\]
and the predicted reduction
\[
  \pred_k\coloneqq m_k(x_k)-m_k(x_k^+).
\]
In particular, if
\begin{align}\label{eq:rho}
  \rho_k\coloneqq\frac{\cred_k}{\pred_k} \ge \eta_1,
\end{align}
we accept $x_k^+$ for user specified $\eta_1\in(0,1)$, i.e., $x_{k+1}=x_k^+$. Otherwise, we reject it, setting $x_{k+1}=x_k$. We then use $\rho_k$ to increase or decrease the trust-region radius $\Delta_k$.

To complete the description of the inexact proximal trust-region method, we require the following assumptions on the inexact evaluations of the objective function $F$, the gradient $\nabla f$, and the stationarity metric $\Psi_k$. 

\begin{assumption}[Inexact Evaluations]
\label{assump:inexact}
For user-specified $r_0\in\R_{++}$, independent of $k$, the following error bounds hold.
\begin{enumerate}
\item {\bf Inexact Value:} There exists $\kobj\in\R_+$, independent of $k$, such that
    \begin{equation}\label{eq:assump.inexact.objective}
        |\ared_k-\cred_k| \leq \kobj\left[ \eta_{\textup{obj}}\min\{\pred_k,\theta_k\}\right]^{\zeta_{\textup{obj}}}
    \end{equation}
where the user-specified positive parameters $\zeta_{\textup{obj}}$, $\eta_{\textup{obj}}$ and $\theta_k$ satisfy
\[
\zeta_{\textup{obj}}>1,\quad 0<\eta_{\textup{obj}}<\min\{\eta_1,(1-\eta_2)\}\quad \text{and}\quad \lim_{k\to+\infty}\theta_k=0.
\]
\label{assump:inexact.object}
\item {\bf Inexact Gradient:} There exists $\kgrad\in\R_+$, independent of $k$, such that
\begin{equation}\label{eq:asssump.inexact.gradeint}
\|g_k - \nabla f(x_k)\|\le \kgrad\min \{\psi_k(r_0),\Delta_k\}.
\end{equation}
\label{assump:inexact.gradient}
\item {\bf Inexact Stationarity Metric:} There exists $\kappa_{\textup{stat}}\in\R_{+}$, independent of $k$, such that
\begin{equation}\label{eq:assump.inexact.psi}
  |\psi_k(r_0)-\bar{\psi}_k(r_0)| \le \kappa_{\textup{stat}} [\eta_{\textup{stat}}\min\{\psi_k(r_0),\Delta_k,\xi_k\}]^{\zeta_{\textup{stat}}},
\end{equation}
where the user-specified positive parameters $\zeta_{\textup{stat}}$, $\eta_{\textup{stat}}$ and $\xi_k$ satisfy
\[
\zeta_{\textup{stat}}>1,\quad 0<\eta_{\textup{stat}}<1\quad \text{and}\quad \lim_{k\to+\infty}\xi_k=0.
\]
\label{assump:inexact.Psi}
\end{enumerate}
\end{assumption}

A few comments are in order regarding \Cref{assump:inexact}.
First, \crefpart{assump:inexact}{assump:inexact.object} ensures that the computed reduction $\cred_k$ is a sufficiently accurate approximation of the actual reduction $\ared_k$.
Here, notice that all quantities on the right-hand side of~\eqref{eq:assump.inexact.objective} are available when
computing $\cred_k$.
Second, \crefpart{assump:inexact}{assump:inexact.gradient} requires approximations of the gradient of $f$ within a prescribed tolerance that depends on the state of the algorithm and in particular, the approximate gradient $g_k$.
See \cite[Appendix~B]{Baraldi2023}, which introduces algorithms for satisfying these assumptions when $f(x)$ and $\nabla f(x)$ are approximated by some functions $\hat{f}(x,\tau)$ and $\hat{g}(x,\tau)$, respectively, that produce order $\tau\ge 0$ approximations.
Third, \crefpart{assump:inexact}{assump:inexact.Psi} is motivated by the global convergence analysis in \cite{Baraldi2023}, which we will discuss later in this section.
Notice that $\psi_k(r_0)$, satisfying \eqref{eq:assump.inexact.psi}, must be computed at the same time as the inexact gradient $g_k$, satisfying \eqref{eq:asssump.inexact.gradeint}.
We extend the inexactness computations derived in \cite[Appendix~B]{Baraldi2023} to account for inexact evaluations of $\psi_k(r_0)$ in \Cref{app:inexact}, where we assume that we can compute a Type~1 approximation of the proximity operator.
Finally, we emphasize that $\zeta_{\textup{obj}}$, $\zeta_{\textup{stat}}$, $\eta_{\textup{obj}}$, $\eta_{\textup{stat}}$, $\theta_k$, and $\xi_k$ are introduced primarily for asymptotic convergence guarantees.
In practice, we use, e.g., $\zeta_{\textup{obj}}=\zeta_{\textup{stat}}=(p+1)/p$, $\eta_{\textup{stat}}=(p-1)/p$, $\eta_{\textup{obj}}=\eta_{\textup{stat}}\min\{\eta_1,(1-\eta_2)\}$, and $\theta_k=\xi_k=10^{-\lfloor k/p \rfloor}$ for $p=1000$.

With \Cref{assump:inexact}, we can fully specify the inexact proximal trust-region algorithm, which we list in \Cref{alg:cap}.
\Cref{alg:cap} is seemingly identical to \cite[Algorithm~1]{Baraldi2023}; however, to prove global convergence, one must carefully handle the inexact evaluation of the proximity operator when, e.g., computing trial iterates via the Cauchy point and when evaluating the stationarity metric $\Psi_k$.

\begin{algorithm}
\caption{Proximal Trust-Region Algorithm}\label{alg:cap}
\begin{algorithmic}[1]
\Require $x_1\in \dom \phi$, $\Delta_1>0$, $0<\eta_1<\eta_2<1$, and $0<\gamma_1\le \gamma_2 < 1 \le \gamma_3$
\For{$k=1,2,\ldots$}
    \State \textbf{Model Selection:} Choose $m_k$ with $g_k$ satisfying \crefpart{assump:inexact}{assump:inexact.gradient} \label{alg:assump.model.inexact}
    \State \textbf{Step Computation:} Compute $x_k^+\in X$ satisfying equation~\eqref{eq:FCD.condition} \label{alg:exact.inexact}
    \State \textbf{Computed Reduction:} Compute $\cred_k$ satisfying \crefpart{assump:inexact}{assump:inexact.object}
    \State \textbf{Step Acceptance and Radius Update:} Compute $\rho_k$ as in \eqref{eq:rho}

    \If{$\rho_k<\eta_1$}
        \State $x_{k+1}\gets x_k$
        \State $\Delta_{k+1}\in [\gamma_1\Delta_k,\gamma_2\Delta_k]$

    \Else
        \State $x_{k+1}\gets x_k^+$
        \If{$\rho_k\in [\eta_1,\eta_2)$}
            \State $\Delta_{k+1}\in[\gamma_2\Delta_k,\Delta_k]$
        \Else
            \State $\Delta_{k+1}\in[\Delta_k,\gamma_3\Delta_k]$
        \EndIf
    \EndIf

\EndFor
\end{algorithmic}
\end{algorithm}

To conclude this discussion, we state the global convergence result of \Cref{alg:cap}.
We note that the proof of this result is nearly identical to the proofs of \cite[Theorem~1]{baraldi2024efficient} and \cite[Theorem~3]{Baraldi2023}, but requires some modification to account for the inexact proximity operator evaluations.
\begin{theorem}[Global Convergence]\label{thm:global.convergence}
  Suppose \Cref{as:probdata} and \Cref{assump:inexact} hold and let $\{x_k\}$ be the sequence of iterates generated by \Cref{alg:cap}.
  If
  \[
    \sum_{k=1}^{+\infty}\frac{1}{1+\displaystyle{\max_{j=1,\ldots,k}}\|B_j\|} = +\infty,
  \]
  then
  \begin{equation}\label{eq:liminf.inexact}
    \liminf_{k\to+\infty}\Psi_k=0.
  \end{equation}
  In addition, suppose that there exists $r_{\max}$ for which $r_k\le r_{\max}$ for all $k$ and that $x_k(r_k)\approx_1\Prox_{r_k\phi}(x_k-r_kg_k)$ with $\varepsilon_k$-precision, where $\varepsilon_k\in\R_+$ satisfies
  \begin{equation}\label{eq:global.conv.eps}
    \varepsilon_k \le \bar\kappa r_k\bar{\Psi}_k
  \end{equation}
  for some fixed $\bar\kappa\in(0,1)$.
  Then, we have that
  \begin{equation}\label{thm.conv.Psi.x.r0}
    \liminf_{k\to \infty} \psi_k(r_0) = 0
  \end{equation}
  and
  \begin{equation}\label{thm.conv.Psi.x}
    \liminf_{k\to \infty} \frac{1}{r}\|\Prox_{r\phi}(x_k-r\nabla f(x_k))-x_k\| = 0 \quad\forall\, r\in\R_{++}.
  \end{equation}
\end{theorem}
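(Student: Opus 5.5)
The argument has three parts, in this order: the standard trust-region contradiction argument for \eqref{eq:liminf.inexact}, then the accuracy assumptions to pass from $\Psi_k$ to $\psi_k(r_0)$, and finally continuity of the exact proximal map to reach \eqref{thm.conv.Psi.x}.

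\textbf{Step 1: \eqref{eq:liminf.inexact}.} Suppose instead that $\Psi_k\ge\epsilon>0$ for all large $k$. Set $\beta_k\coloneqq 1+\max_{j\le k}\|B_j\|$.

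First, I estimate the ratio. Taylor expansion of $f$ at $x_k$, with Lipschitz constant $L$, together with \eqref{eq:FCD.condition.1}, gives
\[
|\ared_k-\pred_k|\le \|\nabla f(x_k)-g_k\|\krad\Delta_k+\tfrac12(L+\|B_k\|)\krad^2\Delta_k^2 .
\]
By \crefpart{assump:inexact}{assump:inexact.gradient}, $\|\nabla f(x_k)-g_k\|\le\kgrad\Delta_k$. By \crefpart{assump:inexact}{assump:inexact.object}, with $\zeta_{\textup{obj}}>1$ and $\eta_{\textup{obj}}<\min\{\eta_1,1-\eta_2\}$, we have $|\cred_k-\ared_k|\le o(\pred_k)+\eta_{\textup{obj}}\pred_k$. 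Using \eqref{eq:FCD.condition.2} with $\Psi_k\ge\epsilon$, these bounds show that $\rho_k\ge\eta_2$ whenever $\Delta_k\le c\,\epsilon/\beta_k$ for a suitable constant $c$.

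Next, I control the radius. The update rule then gives $\Delta_k\ge c'\epsilon/\beta_k$ for all $k$, where $c'$ absorbs $\gamma_1$ and the fact that $\beta_k$ is nondecreasing.

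Finally, I sum the decrease. On successful iterations, $F(x_k)-F(x_{k+1})\ge(\eta_1-\eta_{\textup{obj}})\pred_k-o(\pred_k)\gtrsim\epsilon^2/\beta_k$. This step needs care with the $\theta_k$ term, because $\theta_k\to0$ makes the error $o(\cdot)$ in the right way. Boundedness of $F$ from below then forces $\sum_{k\text{ succ}}1/\beta_k<\infty$. The usual counting argument, as in \cite[Theorem~3]{Baraldi2023}, relates unsuccessful runs to successful ones through the $\gamma$ factors and yields $\sum_k 1/\beta_k<\infty$, which contradicts the hypothesis. None of this uses exactness of the prox, since $\Psi_k$ enters only through \eqref{eq:FCD.condition.2}. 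The inexactness is fully encoded in the definitions, so this step copies \cite{Baraldi2023}.

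\textbf{Step 2: \eqref{thm.conv.Psi.x.r0}.} I first relate $\Psi_k$ to $\bar\Psi_k$. By \Cref{lem:inexac.exact.prox.diff} and \eqref{eq:global.conv.eps},
\[
|\Psi_k-\bar\Psi_k|\le\varepsilon_k/r_k\le\bar\kappa\bar\Psi_k ,
\]
so $\bar\Psi_k\le\Psi_k/(1-\bar\kappa)$. Take a subsequence with $\Psi_k\to0$; along it $\bar\Psi_k=\bar\psi_k(r_k)\to0$.

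To move to $r_0$, I use \Cref{lem:prox.decrease}.
\begin{itemize}
\item If $r_k\ge r_0$, then $r_0\bar\psi_k(r_0)=\varphi(r_0)\le\varphi(r_k)=r_k\bar\psi_k(r_k)$, so $\bar\psi_k(r_0)\le (r_{\max}/r_0)\bar\Psi_k$.
\item If $r_k<r_0$, then $\bar\psi_k(r_0)\le\bar\psi_k(r_k)$.
\end{itemize}
In either case $\bar\psi_k(r_0)\le\max\{1,r_{\max}/r_0\}\bar\Psi_k\to0$.

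Then \eqref{eq:assump.inexact.psi} gives $\psi_k(r_0)\le\bar\psi_k(r_0)+\kappa_{\textup{stat}}(\eta_{\textup{stat}}\xi_k)^{\zeta_{\textup{stat}}}\to0$, since $\xi_k\to0$. This yields \eqref{thm.conv.Psi.x.r0}.

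\textbf{Step 3: \eqref{thm.conv.Psi.x}.} Along the same subsequence, the bound $\|g_k-\nabla f(x_k)\|\le\kgrad\psi_k(r_0)\to0$ holds. The exact prox is nonexpansive, so replacing $g_k$ by $\nabla f(x_k)$ changes $\|\Prox_{r_0\phi}(\cdot)-x_k\|$ by at most $r_0\|g_k-\nabla f(x_k)\|$. Hence the true metric at $r_0$ tends to $0$.

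\Cref{lem:prox.decrease} with $d=-\nabla f(x_k)$ transfers this to any $r$:
\begin{itemize}
\item for $r\ge r_0$, $\psi(r)\le\psi(r_0)$;
\item for $r<r_0$, $\psi(r)=\varphi(r)/r\le\varphi(r_0)/r=(r_0/r)\psi(r_0)$.
\end{itemize}

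The main obstacle is Step 1: checking that the $\theta_k$- and $\eta_{\textup{obj}}$-controlled error in $\cred_k$ still allows the ratio test and the decrease summation. I would follow \cite[Theorem~3]{Baraldi2023} line by line.
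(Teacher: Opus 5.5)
Your proposal is correct and follows the paper's proof essentially step for step. You defer \eqref{eq:liminf.inexact} to the argument of \cite[Theorem~3]{Baraldi2023}, which uses only \eqref{eq:FCD.condition}, the smoothness of $f$ and the radius update. You then derive $(1-\bar\kappa)\bar\Psi_k\le\Psi_k$, use \Cref{lem:prox.decrease} to pass from $r_k$ to $r_0$, replace $g_k$ by $\nabla f(x_k)$ via nonexpansivity and the inexact-gradient condition of \Cref{assump:inexact}, and apply \Cref{lem:prox.decrease} again to reach every $r$.

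The only deviation is minor. You control $\psi_k(r_0)$ by the additive bound $\psi_k(r_0)\le\bar\psi_k(r_0)+\kappa_{\textup{stat}}(\eta_{\textup{stat}}\xi_k)^{\zeta_{\textup{stat}}}$, whereas the paper derives the relative bound $(1-\eta_{\textup{stat}})\psi_k(r_0)\le\bar\psi_k(r_0)$ for large $k$. Both are valid, and yours avoids the paper's separate treatment of $\kappa_{\textup{stat}}=0$.
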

\begin{proof}
  The proof of \eqref{eq:liminf.inexact} is identical to that of \cite[Theorem~3]{Baraldi2023}.
  We briefly describe the notational changes here.
  We first note that \cite[Lemmas~8~\&~9]{Baraldi2023} hold with $\omega_k=\|B_k\|$ and $h_k=\Psi_k$ since they only depend on the fraction of Cauchy decrease condition \eqref{eq:FCD.condition}, the properties of $f$ and the trust-region radius update mechanism.
  Moreover, \cite[Lemma~6~\&~Corollary~2]{Baraldi2023} hold because of \crefpart{assump:inexact}{assump:inexact.object}.
  These are the necessary results for the proof of \cite[Theorem~3]{Baraldi2023} and hence, \eqref{eq:liminf.inexact} follows by identical arguments.

  To prove \eqref{thm.conv.Psi.x.r0} and \eqref{thm.conv.Psi.x}, \Cref{lem:inexac.exact.prox.diff}, \eqref{eq:global.conv.eps} and the nonexpansivity of the proximity operator combine to ensure that
  \[
    \bar{\Psi}_k \le \Psi_k + \frac{\varepsilon_k}{r_k}
    \le \Psi_k + \bar\kappa \bar{\Psi}_k
  \]
  and therefore, $(1-\bar\kappa)\bar{\Psi}_k\le \Psi_k$ proving that the limit inferior of $\bar{\Psi}_k$ is zero since $\bar\kappa<1$.
  Using this fact, we can prove \eqref{thm.conv.Psi.x.r0}.
  In particular, \Cref{lem:prox.decrease} ensures that
  \[
    \left\{\begin{array}{ll}
      \bar{\psi}_k(r_0) \le \bar{\Psi}_k & \text{if $r_0\ge r_k$} \\
      \bar{\psi}_k(r_0) \le \frac{r_{\max}}{r_0}\bar{\Psi}_k & \text{if $r_0<r_k$.}
    \end{array}\right.
  \]
  Now, by \crefpart{assump:inexact}{assump:inexact.Psi}, if $\kappa_{\textup{stat}}=0$, then $\psi_k(r_0)=\bar\psi_k(r_0)$ and \eqref{thm.conv.Psi.x.r0} follows.
  Otherwise, the fact that $\xi_k\to 0$ guarantees the existence of $K_{\textup{stat}}\in\mathbb{N}$ for which $\xi_k\le\kappa_{\textup{stat}}^{-1/(\zeta_{\textup{stat}}-1)}$ for all $k\ge K_{\textup{stat}}$ and therefore,
  \[
    |\psi_k(r_0)-\bar{\psi}_k(r_0)|
    \le \kappa_{\textup{stat}}[\eta_{\textup{stat}}\min\{\psi_k(r_0),\Delta_k\}]\xi_k^{\zeta_{\textup{stat}}-1}
    \le \eta_{\textup{stat}}\min\{\psi_k(r_0),\Delta_k\}
  \]
  for all $k\ge K_{\textup{stat}}$.
  Consequently, the preceding estimate ensures that
  \[
    \psi_k(r_0) \le \bar{\psi}_k(r_0) + \eta_{\textup{stat}}\psi_k(r_0) \qquad\implies\qquad
    (1-\eta_{\textup{stat}})\psi_k(r_0) \le \bar{\psi}_k(r_0)
  \]
  and \eqref{thm.conv.Psi.x.r0} follows since $\eta_{\textup{stat}}<1$.  Finally, to prove \eqref{thm.conv.Psi.x}, the nonexpansivity of the proximity operator and \crefpart{assump:inexact}{assump:inexact.gradient} yield
  \begin{align*}
    \frac{1}{r_0}\|\Prox_{r_0\phi}(x_k-r_0\nabla f(x_k))-x_k\|
    &\le \|g_k-\nabla f(x_k)\| + \bar\psi_k(r_0) \\
    &\le \kappa_{\textup{grad}}\psi_k(r_0) + \bar{\psi}_k(r_0),
  \end{align*}
  the left-hand side of which has limit inferior equal to zero since $\psi_k(r_0)$ and $\bar{\psi}_k(r_0)$ do.
  The result \eqref{thm.conv.Psi.x} then follows as before using \Cref{lem:prox.decrease}.
\end{proof}

Upon first glance, the condition \eqref{eq:global.conv.eps} appears to be impractical.  However, in the subsequent subsection, we analyze an algorithm (\Cref{alg:inex.epsilon}) that is guaranteed to satisfy \eqref{eq:global.conv.eps} in finitely many iterations.
In addition, we note that if one wishes to use $\Psi_k$ as a stopping condition for \Cref{alg:cap}, \Cref{assump:inexact} needs to be modified so that $\psi_k(r_0)$ is replaced by $\Psi_k$ in \eqref{eq:asssump.inexact.gradeint} and \eqref{eq:assump.inexact.psi}.
However, this introduces significant computational challenge since $r_k$, $x_k(r_k)$, $\Psi_k$ and $g_k$ would all be interdependent.
As such, it is often more convenient to fix $r_0$ and enforce \Cref{assump:inexact} as stated.
\Cref{thm:global.convergence} then ensures that $\psi_k(r_0)$ will fall below a prescribed tolerance within finitely many iterations (cf.\ \eqref{thm.conv.Psi.x.r0}).
Finally, \Cref{thm:global.convergence}, particularly \eqref{thm.conv.Psi.x}, demonstrates that the sequence of iterations $\{x_k\}$ admits a subsequence that is asymptotically stationary for the original problem \eqref{eq:optprob}.

\subsection{Inexact Proximal Cauchy Point}\label{subsec:gcp}
A critical component of the convergence theory for Algorithm~\ref{alg:cap} is that the trial iterates satisfy \eqref{eq:FCD.condition}.
In the nonsmooth case, \cite{Baraldi2023} instead defines the Cauchy point using the exact proximal gradient path, $\bar{p}_k(r)$ where the steplength $r_k$ is chosen to satisfy Armijo-Goldstein-type conditions on the inexact proximal gradient path.
These conditions, which \cite{Baraldi2023} satisfy using a bidirectional proximal search, are problematic when the proximity operator is evaluated inexactly since the proof that $r_k$ exists depends heavily on the monotonicity properties from \Cref{lem:prox.decrease}.
Instead, we employ the simplified Cauchy point introduced in \cite{baraldi2024efficient}, which is defined by
\begin{equation}\label{eq:cp}
  x_k^c = x_k + \alpha_k p_k(r_k),
\end{equation}
where $r_k\in[r_{\min},r_{\max}]$ with $0<r_{\min}\le r_{\max}<+\infty$ and $\alpha_k\in[0,1]$ is computed by solving the one-dimensional quadratic optimization problem
\[
\begin{aligned}
  &\min_{\alpha} \frac{\alpha}{2}\inner{B_k p_k(r_k)}{p_k(r_k)} + \alpha(\inner{g_k}{p_k(r_k)}+\phi(x_k(r_k))-\phi(x_k)) \\
  &\textup{subject to}\quad
  0 \le \alpha \le \min\left\{1,\frac{\Delta_k}{\|p_k(r_k)\|}\right\}.
\end{aligned}
\]
In \cite{baraldi2024efficient}, $r_k$ is chosen to be the safeguarded spectral step length
\[
  r_k=\max\{r_{\min},\min\{r_{\max},r_{k,0}\}\}
  \quad\text{for}\quad
  r_{k,0}=\left\{\begin{array}{ll}
  \frac{\|g_k\|^2}{\inner{B_k g_k}{g_k}} & \text{if $\inner{B_k g_k}{g_k} > 0$} \\
  \frac{r_0}{\|g_k\|} & \text{otherwise.}
  \end{array}\right.
\]
However, other selections are possible.
For example, one could compute $r_k$ by applying the bidirectional search algorithm introduced in \cite[Algorithm~2]{Baraldi2023}, possibly exiting prematurely so that $r_k\in[r_{\min},r_{\max}]$.
Notice that the bounds on $\alpha_k$ ensure that $x_k^c$ satisfies \eqref{eq:FCD.condition.1} with $\kappa_{\textup{rad}}=1$. 
In order to prove that $x_k^c$ also satisfies \eqref{eq:FCD.condition.2} --- and hence an acceptable trial iterate $x_k^+$ exists --- we must use the descent properties of \Cref{lem:decr.technical}.
In particular, according to \crefpart{lem:decr.technical}{lem:decr.technical.1}, for each $r\in\R_{++}$, there exists $\varepsilon=\varepsilon(r)\in\R_{++}$ for which
\[
  Q_k(r)\coloneqq \inner{g_k}{p_k(r)} + \phi(x_k(r))-\phi(x_k)
\]
computed using $x_k(r)\approx_1\Proxrphi(x_k-rg_k)$ with $\varepsilon$-precision, satisfies
\begin{equation}\label{eq:decr.inexact}
  Q_k(r) \le -\frac{\kappa_{\rm desc}}{r}\|p_k(r)\|^2,
\end{equation}
where $\kappa_{\textup{desc}}\in(0,1)$.

The relative accuracy condition in \crefpart{lem:decr.technical}{lem:decr.technical.2} motivates a procedure for controlling $\varepsilon$ so that $x_k(r)$ satisfies the inexact descent condition \eqref{eq:decr.inexact} --- and ultimately the requirement \eqref{eq:global.conv.eps} for global convergence in \Cref{thm:global.convergence} --- 
in finite time.
\begin{algorithm}[htb!]
\caption{Inexact Proximal Gradient Evaluation}
\begin{algorithmic}[1]
\Require{$x_k,\,g_k\in X$, $r,\,\varepsilon^{(0)}\in\R_{++}$, $\beta\in(0,1)$, $\kappa\in(0,\tfrac12)$, and $\ell=0$}
\State $u^{(0)}\approx_1 \Proxrphi(x_k-rg_k)$ with $\varepsilon^{(0)}$-precision
\While{$\varepsilon^{(\ell)} > \kappa\|u^{(\ell)}-x_k\|$}
  \State{$\ell \gets \ell+1$}
  \State{$\varepsilon^{(\ell)}\gets \beta\varepsilon^{(\ell-1)}$}
  \State $u^{(\ell)}\approx_1\Proxrphi(x_k-rg_k)$ with $\varepsilon^{(\ell)}$-precision
\EndWhile
\end{algorithmic}
\label{alg:inex.epsilon}
\end{algorithm}
\Cref{alg:inex.epsilon} describes such a method for computing $\varepsilon$ by systematically decreasing $\varepsilon$, i.e., $\varepsilon\gets\beta\varepsilon$ for fixed $\beta\in(0,1)$.
It is straightforward to show from \Cref{lem:decr.technical} 
that \Cref{alg:inex.epsilon} is guaranteed to terminate in finitely many iterations, which we do in the next result.
\begin{proposition}
Fix $r\in\R_{++}$. If $x_k\neq \bar{x}_k(r)$, then \Cref{alg:inex.epsilon} terminates in at most
\[
  \bar\ell = \max\left\{0,\left\lceil \log_\beta\left(\kappa\|\bar{x}_k(r)-x_k\|\right)-\log_\beta((1+\kappa)\varepsilon^{(0)})\right\rceil\right\}
\]
iterations with $u^{(\ell)}=x_k(r)$ satisfying
\eqref{eq:decr.inexact}. 
\end{proposition}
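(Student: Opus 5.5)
We plan to show that the while-loop exit test $\varepsilon^{(\ell)}\le\kappa\|u^{(\ell)}-x_k\|$ is forced once $\varepsilon^{(\ell)}$ is small relative to $\|\bar{x}_k(r)-x_k\|$. Once the loop stops, the descent bound \eqref{eq:decr.inexact} should follow from the argument of \crefpart{lem:decr.technical}{lem:decr.technical.2}. Throughout, write $\bar d\coloneqq\|\bar{x}_k(r)-x_k\|>0$. This quantity is positive by the hypothesis $x_k\neq\bar{x}_k(r)$.

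\textbf{Step 1 (a sufficient condition for exit).} By \Cref{lem:inexac.exact.prox.diff}, $\|u^{(\ell)}-\bar{x}_k(r)\|\le\varepsilon^{(\ell)}$. The reverse triangle inequality then gives
\[
\|u^{(\ell)}-x_k\|\ge \bar d-\varepsilon^{(\ell)} .
\]
So the exit test holds as soon as $\varepsilon^{(\ell)}\le\kappa(\bar d-\varepsilon^{(\ell)})$, that is, as soon as $(1+\kappa)\varepsilon^{(\ell)}\le\kappa\bar d$.

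\textbf{Step 2 (counting iterations).} Since $\varepsilon^{(\ell)}=\beta^\ell\varepsilon^{(0)}$, the condition from Step 1 reads $\beta^\ell\le \kappa\bar d/((1+\kappa)\varepsilon^{(0)})$. Because $\beta\in(0,1)$, the map $\log_\beta$ is decreasing. Taking $\log_\beta$ therefore gives
\[
\ell\ge\log_\beta(\kappa\bar d)-\log_\beta((1+\kappa)\varepsilon^{(0)}).
\]
Hence every $\ell\ge\bar\ell$ satisfies the exit test. The $\max\{0,\cdot\}$ in $\bar\ell$ covers the case where the initial tolerance already suffices, so the loop stops with $\ell\le\bar\ell$.

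\textbf{Step 3 (descent at exit).} At the terminating index, $u=u^{(\ell)}$ and $\varepsilon=\varepsilon^{(\ell)}$ satisfy $\varepsilon\le\kappa\|u-x_k\|$. We then apply \crefpart{lem:decr.technical}{lem:decr.technical.1} with $x=x_k$ and $g=g_k$:
\[
Q_k(r)\le-\frac{1}{2r}\|u-x_k\|^2+\frac{\varepsilon^2}{2r}\le-\frac{1-\kappa^2}{2r}\|p_k(r)\|^2 .
\]
This is \eqref{eq:decr.inexact} with $\kappa_{\rm desc}=(1-\kappa^2)/2\in(0,1)$. Alternatively, one can route through \crefpart{lem:decr.technical}{lem:decr.technical.2}, using $\varepsilon\le\kappa(\varepsilon+\|u-x_k\|)$ to bound $\varepsilon$ relative to $\bar d$.

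\textbf{Main obstacle.} The only delicate point is that the exit test is stated with the computed $u^{(\ell)}$ and not with the unknown $\bar{x}_k(r)$. Step 1 handles this through \Cref{lem:inexac.exact.prox.diff}, which bounds $\|u^{(\ell)}-x_k\|$ from below in terms of $\bar d$. That lower bound is exactly what produces the factor $1+\kappa$ in $\bar\ell$.
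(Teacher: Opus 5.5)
Your proof is correct and follows the paper's argument essentially line by line. \Cref{lem:inexac.exact.prox.diff} and the reverse triangle inequality reduce the exit test to $(1+\kappa)\varepsilon^{(\ell)}\le\kappa\|\bar{x}_k(r)-x_k\|$, which gives the iteration count, and \crefpart{lem:decr.technical}{lem:decr.technical.1} at the exit index gives \eqref{eq:decr.inexact} with $\kappa_{\rm desc}=(1-\kappa^2)/2$.

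Drop the parenthetical alternative through \crefpart{lem:decr.technical}{lem:decr.technical.2}, since it does not cover all admissible $\kappa$. The exit test only gives $\varepsilon\le\kappa(\varepsilon+\|\bar{x}_k(r)-x_k\|)$, that is, $\varepsilon\le\frac{\kappa}{1-\kappa}\|\bar{x}_k(r)-x_k\|$. That lemma needs this constant to be below $\tfrac12$, so the route works only for $\kappa<\tfrac13$, not for all $\kappa\in(0,\tfrac12)$.
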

\begin{proof}
  The bound on the total number of iterations is then produced by noting that \Cref{lem:inexac.exact.prox.diff} ensures
  \[
    \|u^{(\ell)}-\bar{x}_k(r)\|\le \varepsilon^{(\ell)}
  \]
  and so the reverse triangle inequality yields
  \[
    \|u^{(\ell)}-x_k\| \ge \|\bar{x}_k(r)-x_k\|-\varepsilon^{(\ell)}.
  \]
  Therefore, \Cref{alg:inex.epsilon} will terminate when
  \[
    \varepsilon^{(\ell)} \le \kappa(\|\bar{x}_k(r)-x_k\|-\varepsilon^{(\ell)})
    \qquad\iff\qquad
    \varepsilon^{(\ell)} \le \frac{\kappa}{1+\kappa}\|\bar{x}_k(r)-x_k\|.
  \]
  Using the fact that $\varepsilon^{(\ell)}=\varepsilon^{(0)}\beta^\ell$ we see that any
  \[
    \ell \ge \log_\beta(\kappa\|\bar{x}_k(r)-x_k\|)-\log_\beta((1+\kappa)\varepsilon^{(0)})
  \]
  satisfies the stopping conditions in \Cref{alg:inex.epsilon}, proving the iteration bound.
  The fact that the computed $x_k(r)$ satisfies \eqref{eq:decr.inexact} follows from \crefpart{lem:decr.technical}{lem:decr.technical.1} since there exists $\ell$ for which $\varepsilon^{(\ell)}\le\kappa\|u^{(\ell)}-x_k\|$.
  Plugging this into \crefpart{lem:decr.technical}{lem:decr.technical.1} yields \eqref{eq:decr.inexact} with $\kappa_{\textup{desc}}=\frac{1-\kappa^2}{2}$.
\end{proof}

Note that if we use \Cref{alg:inex.epsilon} to compute $x_k(r_k)$, then $\Psi_k$ satisfies \eqref{eq:global.conv.eps} with constant $\bar\kappa=\kappa/(1-\kappa)\in(0,1)$, provided that $\kappa\in(0,\tfrac12)$.
We are now in position to prove the existence of such a trial iterate $x_k^+$ satisfying \eqref{eq:FCD.condition}.
To prove this result, it suffices to show that the simplified Cauchy point, $x_k^c$, satisfies \eqref{eq:FCD.condition}.
\begin{theorem}
\label{thm:fcd}
  Suppose $\Psi_k>0$, where $r_k\in[r_{\min},r_{\max}]$, $x_k(r_k)\approx_1\Prox_{r_k\phi}(x_k-r_kg_k)$ with $\varepsilon_k$-precision, and $\varepsilon_k$ is chosen sufficiently small so that \eqref{eq:decr.inexact} holds.
  Then, there exists $x_k^+\in X$ that satisfies \eqref{eq:FCD.condition} with
  \[
  \kappa_{\textup{fcd}}=\tfrac{1}{2}\kappa_{\textup{desc}}\min\{\kappa_{\textup{desc}},r_{\min}\}
  \qquad\text{and}\qquad \kappa_{\textup{rad}}=1.
  \]
\end{theorem}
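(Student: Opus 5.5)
Take $x_k^+=x_k^c$, the simplified Cauchy point \eqref{eq:cp}. The constraint $0\le\alpha_k\le\min\{1,\Delta_k/\|p_k(r_k)\|\}$ immediately gives $\|x_k^c-x_k\|=\alpha_k\|p_k(r_k)\|\le\Delta_k$, which is \eqref{eq:FCD.condition.1} with $\kappa_{\textup{rad}}=1$. The remaining work is the decrease bound \eqref{eq:FCD.condition.2}. Write $p=p_k(r_k)$, $Q=Q_k(r_k)$ and $\Psi_k=\|p\|/r_k$.

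The first step is to bound $m_k(x_k)-m_k(x_k^c)$ from below by the one-dimensional objective. Convexity of $\phi$ and $\alpha\in[0,1]$ give
\[
\phi(x_k+\alpha p)\le(1-\alpha)\phi(x_k)+\alpha\phi(x_k(r_k)).
\]
Hence
\[
m_k(x_k)-m_k(x_k+\alpha p)\ge -\tfrac{\alpha^2}{2}\inner{B_kp}{p}-\alpha Q.
\]
I will work with this exact quadratic $h(\alpha)=\tfrac{\alpha^2}{2}\inner{B_kp}{p}+\alpha Q$; the displayed one-dimensional problem has $\alpha/2$, which I read as $\alpha^2/2$. Since $\alpha_k$ minimizes $h$ over the feasible interval, $-h(\alpha_k)\ge -h(\alpha)$ for any feasible $\alpha$. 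By \eqref{eq:decr.inexact}, $-Q\ge\kappa_{\textup{desc}}\|p\|^2/r_k=\kappa_{\textup{desc}}r_k\Psi_k^2>0$.

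The second step is the standard Cauchy-decrease case analysis. Set $\omega=\inner{B_kp}{p}/\|p\|^2\le\|B_k\|$ and $\bar\alpha=\min\{1,\Delta_k/\|p\|\}$.
\begin{itemize}
\item If $\omega\le0$, or the unconstrained minimizer $-Q/(\omega\|p\|^2)$ exceeds $\bar\alpha$, then $\alpha_k=\bar\alpha$. Using $\tfrac12\bar\alpha^2\omega\|p\|^2\le\tfrac12\bar\alpha(-Q)$ in the latter case, the decrease is at least $\tfrac12\bar\alpha(-Q)$.
\item Otherwise the decrease equals $Q^2/(2\omega\|p\|^2)\ge \kappa_{\textup{desc}}^2 r_k^2\Psi_k^4/(2(1+\|B_k\|)r_k^2\Psi_k^2)=\tfrac{\kappa_{\textup{desc}}^2}{2}\Psi_k^2/(1+\|B_k\|)$.
\end{itemize}
In the boundary case, $\tfrac12\bar\alpha(-Q)\ge\tfrac12\kappa_{\textup{desc}}\min\{r_k\Psi_k^2,\ \Psi_k\Delta_k\}$, because $\bar\alpha\|p\|^2/r_k$ equals either $\|p\|^2/r_k=r_k\Psi_k^2$ or $\Delta_k\|p\|/r_k=\Delta_k\Psi_k$.

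The last step, which I expect to be the fiddliest, is absorbing $r_k\Psi_k^2$ into the form $\Psi_k\min\{\Psi_k/(1+\|B_k\|),\Delta_k\}$. Since $r_k\ge r_{\min}$ and $1+\|B_k\|\ge1$, we have $r_k\Psi_k^2\ge r_{\min}\Psi_k^2/(1+\|B_k\|)$. Collecting the cases, every case gives at least
\[
\tfrac12\kappa_{\textup{desc}}\min\{\kappa_{\textup{desc}},r_{\min}\}\,\Psi_k\min\{\Psi_k/(1+\|B_k\|),\Delta_k\},
\]
using $\kappa_{\textup{desc}}<1$ to merge the constants. This yields exactly $\kappa_{\textup{fcd}}$. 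The care needed is in tracking which case produces which factor, and in confirming that the interior-minimizer case never loses the factor $r_{\min}$.
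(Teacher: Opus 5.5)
Your proposal is correct and follows the paper's proof. You take $x_k^+=x_k^c$, use convexity of $\phi$ to bound the model decrease below by $-q_k(\alpha)$ (the paper's proof also uses the $\alpha^2/2$ quadratic, confirming your reading of the typo), and run the boundary versus interior-minimizer case analysis with $-d_k\ge\kappa_{\textup{desc}}r_k\Psi_k^2$. The only difference is cosmetic: you merge the paper's separate cases $\alpha_k=1$ and $\alpha_k=\Delta_k/\|s_k\|$ into one case at $\bar\alpha=\min\{1,\Delta_k/\|p\|\}$, which gives the same constant $\tfrac12\kappa_{\textup{desc}}\min\{\kappa_{\textup{desc}},r_{\min}\}$.
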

\begin{proof}
  The result follows if $x_k^c$ satisfies \eqref{eq:FCD.condition}, and then setting $x_k^+=x_k^c$.
  To prove that $x_k^c$ satisfies \eqref{eq:FCD.condition}, we first notice that \eqref{eq:FCD.condition.1} holds with $\krad=1$ because of the upper bound on $\alpha_k$.
  Now to prove that \eqref{eq:FCD.condition.2} holds, we follow the proof of \cite[Proposition~2]{baraldi2024efficient}.
  In language of the proof of \cite[Proposition~2]{baraldi2024efficient}, we define the following quantities
  \[
  \begin{aligned}
    s_k&\coloneqq p_k(r_k), \qquad
    &&\alpha_{k,\max}\coloneqq \min\{1,\Delta_k/\|s_k\|\},\\ 
    \kappa_k &\coloneqq \inner{B_kp_k(r_k)}{p_k(r_k)}, \qquad 
    &&d_k      \coloneqq \inner{g_k}{p_k(r_k)} + \phi(x_k(r_k)) - \phi(x_k), \\
    q_k(\alpha) &\coloneqq \frac{\kappa_k}{2}\alpha^2 + d_k\alpha.&&
  \end{aligned}
  \]
  Here, we assume that $\varepsilon_k=\varepsilon_k(r_k)$ is chosen sufficiently small so that \eqref{eq:decr.inexact} holds (e.g., via \Cref{alg:inex.epsilon}).
  Recall that $d_k \le -\kappa_{\textup{desc}}r_k \Psi_k^2$ from \eqref{eq:decr.inexact}, $r_k\Psi_k=\|s_k\|$ and the unconstrained minimizer of $q_k$ is
  $-d_k/\kappa_k$ when $\kappa_k>0$.
  When $\kappa_k>0$, we have $\alpha_k = \min\{-d_k/\kappa_k,\alpha_{k,\max}\}$. When
  $\kappa_k=0$, $q_k(\alpha) = d_k\alpha \le -\kappa_{\textup{desc}}r_k \Psi_k^2\alpha$ and $\alpha_k = \alpha_{k,\max}$.
  Finally, if $\kappa_k < 0$, then
  $q_k$ is concave and $\alpha_k$ is either 0 or $\alpha_{k,\max}$.
  In this case, we see that
  \[
    q_k(\alpha_{k,\max}) \le -\kappa_{\textup{desc}}\Psi_k \min\{r_k \Psi_k,\Delta_k\} < 0 = q_k(0)
  \]
  and hence $\alpha_k=\alpha_{k,\max}$.
  Consequently, there are three
  cases to handle: $\alpha_k=1$, $\alpha_k=\Delta_k/\norm{s_k}$, and
  $\alpha_k=-d_k/\kappa_k$.
  For these cases, we rely heavily on the bound
  \[
    m_k(x_k)-m_k(x_k+\alpha s_k) \ge -\frac{\kappa_k}{2}\alpha^2 - d_k\alpha \ge -q_k(\alpha) \quad\forall\,\alpha\in [0,1].
  \]
  Treatment of the three cases is nearly identical to the proof of \cite[Proposition~2]{baraldi2024efficient} with the addition of the constant $\kappa_{\textup{desc}}$.
  We include the proof here for completeness.

  \noindent
  {\bf Case $\alpha_k=1$:} If $\kappa_k \le 0$, then
  the facts $1+\|B_k\| \ge 1$ and $r_k \ge r_{\min}$ ensure
  \[
    m_k(x_k) - m_k(x_k^c) \ge -d_k
                          \ge \kappa_{\textup{desc}}r_k \Psi_k^2
                          \ge\kappa_{\textup{desc}} r_{\min} \frac{\Psi_k^2}{1+\|B_k\|}.
  \]
  If $\kappa_k > 0$, then the unconstrained minimizer of $q_k$ satisfies
  $-d_k/\kappa_k\ge 1$, which is equivalent to
  $\kappa_k \le -d_k$. Consequently,
  \[
    m_k(x_k) - m_k(x_k^c)
      \ge -\frac{1}{2}\kappa_k-d_k
      \ge -\frac{1}{2} d_k
      \ge \kappa_{\textup{desc}}\frac{r_{\min}}{2}\frac{\Psi_k^2}{1+\|B_k\|}.
  \]

  \noindent
  {\bf Case $\alpha_k=\Delta_k/\norm{s_k}$:} If $\kappa_k\le 0$, then
  $\alpha_k \le 1$ and
  \begin{align*}
    m_k(x_k) - m_k(x_k^c) \ge -\alpha_k d_k
                          \ge \kappa_{\textup{desc}}\frac{\Delta_k}{\norm{s_k}}r_k\Psi_k^2
                           =  \kappa_{\textup{desc}}\Delta_k \Psi_k.
  \end{align*}
  If $\kappa_k > 0$, then
  $\alpha_k=\Delta_k/\norm{s_k}\le-d_k/\kappa_k$. Consequently,
  \[
    m_k(x_k) - m_k(x_k+\alpha_k s_k) = \alpha_k(-\tfrac12\alpha_k\kappa_k - d_k)
      \ge -\frac{\alpha_k}{2}d_k
      \ge \frac{\kappa_{\textup{desc}}}{2}\Delta_k\Psi_k.
  \]

  \noindent
  {\bf Case $\alpha_k=-d_k/\kappa_k$:} In this case,
  $0<-d_k\le\kappa_k\le\norm{B_k}\norm{s_k}^2$ and
  \[
    m_k(x_k)-m_k(x_k^c) \ge \frac{1}{2} \frac{d_k^2}{\kappa_k}
    \ge \frac{\kappa_{\textup{desc}}^2}{2}\frac{r_k^2\Psi_k^4}{\norm{B_k}\norm{s_k}^2}
    \ge \frac{\kappa_{\textup{desc}}^2}{2}\frac{\Psi_k^2}{1+\norm{B_k}}.
  \]
  Combining cases 1, 2 and 3 proves that \eqref{eq:FCD.condition.2} holds for $x_k^c$ as desired.
\end{proof}

\subsection{Subproblem Solvers}
\label{sec:sub}
The Cauchy point analyzed in the previous subsection plays a pivotal role in the computation of the trial iterates $x_k^+$.
Building on the Cauchy point, we now provide a brief description of the necessary modifications to the subproblem solvers presented
in \cite[Algorithms~3~\&~4]{baraldi2024efficient}, which are referred to in that source as {\tt SPG2} and {\tt NCG}.
These methods leverage the exact proximity operator to generate trial iterates $x_k^+$ that satisfy the fraction of Cauchy decrease condition \eqref{eq:FCD.condition}
and improve upon the Cauchy point using a descent method, thereby ensuring global convergence of \Cref{alg:cap}.
These methods are iterative and compute steps at the $j$-th inner iteration based on the proximal gradient
\begin{equation}\label{eq:spgstep}
  s_{k,j} = \Prox_{t_{k,j}\phi}(x_{k,j} - t_{k,j}\nabla f_k(x_{k,j})) - x_{k,j},
\end{equation}
where
$t_{k,j}\in[t_{\min},t_{\max}]$ is the safeguarded spectral step
length given by $t_{k,j}=\max\{t_{\min},\min\{t_{\max},\bar{t}_{k,j}\}\}$ with 
\[
    \bar{t}_{k,j} = \left\{\begin{array}{ll}
        \frac{\|s_{k,j-1}\|^2}{\inner{s_{k,j-1}}{B_k s_{k,j-1}}} & \text{if $\inner{s_{k,j-1}}{B_ks_{k,j-1}} >0$} \\
        \frac{r_k}{\norm{d_{k,j}}} & \text{otherwise,}
    \end{array}\right.
\]
where $d_{k,j}$ is the subproblem gradient. 
Clearly, when the proximity operator is computed inexactly, \eqref{eq:spgstep} is modified to
\[
  s_{k,j}=u_{k,j}-x_{k,j},
\]
where $u_{k,j}\approx_1\Prox_{t_{k,j}\phi}(x_{k,j}-t_{k,j}\nabla f_k(x_{k,j}))$ with $\varepsilon_j$-precision.  We compute $\varepsilon_j$ using \Cref{alg:inex.epsilon} with $\varepsilon^{(0)}=\varepsilon_{j-1}$.
In practice, we terminate the inexact proximity operator evaluation when either the criterion on the error $\varepsilon_j$ is satisfied
or when the decrease condition in \eqref{eq:decr.inexact} is satisfied. 
Consequently, the steps computed by {\tt SPG2} and {\tt NCG} decrease the subproblem model (cf.~\Cref{lem:decr.technical}), ensuring that \eqref{eq:FCD.condition} holds.
Moreover, we terminate the subproblem routine if a fixed iteration limit is exceeded or when the stopping criterion
\[
  \frac{1}{t_{k,j}}\|s_{k,j}\| \le \min\{\tau_{\textup{abs}},\tau_{\textup{rel}} \Psi_k^{1+\alpha}\}
\]
is satisfied.
Here, $\tau_{\textup{abs}},\, \tau_{\textup{rel}}>0$ are absolute and relative tolerances, respectively, and $\alpha\ge 0$ controls the rate of convergence of \Cref{alg:cap}, cf.\ \cite[Theorem~3]{baraldi2024local}. 
We mention here that the spectral projected gradient method ({\tt SPG}) introduced in \cite[Algorithm~5]{Baraldi2023} and the dogleg method ({\tt Dogleg}) introduced in \cite[Algorithm~2]{baraldi2024efficient} require less straightforward modifications to account for inexact evaluations of the proximity operator.
For example, {\tt SPG} requires computing a zero of the nonlinear equation
\[
  t\mapsto \|\Prox_{rt\phi}(x_k+t(x-x_k))-x_k\|-\Delta_k
\]
to evaluate the proximity operator of $x$ associated with $\phi$, accounting for the trust-region constraint, while certain instances of {\tt Dogleg} require the application of the generalized Jacobian of the proximity operator. 
As such, we do not study these methods further in this work.

\section{Iterative Methods for Computing Proximity Operators}
\label{sec:compprox}
Methods for approximately computing proximity operators are typically tailored to the specific structure of the nonsmooth function $\phi$.  In this section, we describe two such procedures.  The first applies to structured nonsmooth functions with the form
\begin{equation}\label{eq:struc.nonsmooth}
  \phi(x) = \phi_0(x)+\phi_1(Dx-b),
\end{equation}
where $Y$ is another Hilbert space, $\phi_0:X\to(-\infty,+\infty]$ and $\phi_1:Y\to(-\infty,+\infty]$ are proper, closed and convex, $D:X\to Y$ is a continuous linear map and $b\in Y$.
The second applies to the case of computing a proximity operator using weighted inner products.

\subsection{Structured Nonsmooth Optimization}
Since $\phi$ in \eqref{eq:struc.nonsmooth} is the sum of nonsmooth functions and involves the composition with an affine map, there is generally no closed-form expression for its proximity operator.
Instead, we can approximate the proximity operator using an iterative method applied to the dual problem
\[
  \max_{y\in Y} \left\{ \min_{x\in X} \left\{\frac{1}{2r}\|x-z\|^2+\phi_0(x)+(y,Dx-b)\right\} - \phi_1^*(y)\right\}.
\]
Defining $d:Y\to\R$ by
\[
  d(y)\coloneqq\min_{x\in X} \left\{\frac{1}{2r}\|x-z\|^2+\phi_0(x)+\inner{y}{Dx-b}\right\},
\]
we see that $d$ is concave and Fr\'{e}chet differentiable with Lipschitz continuous gradient and the dual problem can be equivalently written as the maximization problem
\begin{equation}\label{eq:struct.prox.dual}
  \max_{y\in Y}\; \{v(y)\coloneqq d(y)-\phi_1^*(y)\}.
\end{equation}
We can solve the dual problem \eqref{eq:struct.prox.dual} using the spectral proximal gradient method as in \cite{kouri2026tr}, which we include as \Cref{alg:struct.prim.dual}.
\Cref{alg:struct.prim.dual} produces the primal-dual iterates $(x^{(\ell)},y^{(\ell)})$ given by
\begin{equation}\label{eq:struct.prim.dual}
\left\{\begin{aligned}
  x^{(\ell)}&=\Prox_{r\phi_0}(z-rD^*y^{(\ell)}) \\
  y^{(\ell+1)}&=y^{(\ell)}+\lambda^{(\ell)}[\Prox_{\gamma^{(\ell)}\phi_1^*}(y^{(\ell)}+\gamma^{(\ell)}(Dx^{(\ell)}-b))-y^{(\ell)}],
\end{aligned}\right.
\end{equation}
where $0<\gamma_{\min}\le\gamma_{\max}$ are safeguard parameters, $\gamma^{(\ell)}\in[\gamma_{\min},\gamma_{\max}]$ is the safeguarded Barzilai--Borwein step length, and $\lambda^{(\ell)}$ is chosen according to a monotone or nonmonotone line search.
\begin{algorithm}[!ht]
\caption{Primal-Dual Proximity Operator Computation}
  \label{alg:struct.prim.dual}
  \begin{algorithmic}[1]
    \Require{Proximity operator arguments $z\in X$ and $r\in\R_{++}$, initial guess
      $y^{(1)}\in\dom{\,\phi_1^*}$ and $V^{(1)}=v(y^{(1)})$, safeguards $0<\gamma_{\min}<\gamma_{\max}<+\infty$ and $a_{\min}\in(0,1]$,
      initial steplengths $\gamma^{(1)}\in[\gamma_{\min},\gamma_{\max}]$ and
      $\lambda_0\in(0,1]$, and $0<\sigma_1<\sigma_2<1$, and positive parameters
      $\alpha\in(0,1)$, $\beta\in[\sigma_1,\sigma_2]$ and $\varepsilon\in\R_{++}$}
    \For{$\ell=1,2,\ldots$}
      \State{$x^{(\ell)}\gets\Prox_{r\phi_0}(z-rD^*y^{(\ell)})$}
      \If{$\phi_1(Dx^{(\ell)}-b)+\phi_1^*(y^{(\ell)})-\inp{y^{(\ell)}}{Dx^{(\ell)}-b} \le \varepsilon^2/(2r)$}
        \State{\bf break}
      \EndIf
      \State{$\mu^{(\ell)}\gets Dx^{(\ell)}-b$}
      \If{$\ell>1$}
        \State{$\gamma^{(\ell)}\gets\min\left\{\gamma_{\max},\max\left\{\gamma_{\min},\displaystyle{\frac{\lambda^{(\ell-1)}\|s^{(\ell-1)}\|_Y^2}{(\mu^{(\ell-1)}-\mu^{(\ell)},s^{(\ell-1)})_Y}}\right\}\right\}$}
      \EndIf
      \State{$s^{(\ell)}\gets\Prox_{\gamma^{(\ell)}\phi_1^*}(y^{(\ell)}+\gamma^{(\ell)}\mu^{(\ell)})-y^{(\ell)}$}
      \State{$\lambda^{(\ell)}\gets\lambda_0$}
      \State{$\mathcal{Q}^{(\ell)}\gets(\mu^{(\ell)},s^{(\ell)})_Y+\phi_1^*(y^{(\ell)})-\phi_1^*(y^{(\ell)}+s^{(\ell)})$}
      \While{$v(y^{(\ell)}+\lambda^{(\ell)} s^{(\ell)}) < V^{(\ell)} + \alpha\lambda^{(\ell)}\mathcal{Q}^{(\ell)}$}
        \State{$\delta\gets-\displaystyle{\frac{[\lambda^{(\ell)}(\mu^{(\ell)},s^{(\ell)})_Y+\phi_1^*(y^{(\ell)})-\phi_1^*(y^{(\ell)}+\lambda^{(\ell)}s^{(\ell)})]}{2[d(y^{(\ell)}+\lambda^{(\ell)} s^{(\ell)})-d(y^{(\ell)})-\lambda^{(\ell)}(\mu^{(\ell)},s^{(\ell)})_Y]}}$}
        \If{$\delta\in[\sigma_1,\sigma_2]$}
          \State{$\lambda^{(\ell)}\gets\delta\lambda^{(\ell)}$}
        \Else
          \State{$\lambda^{(\ell)}\gets\beta\lambda^{(\ell)}$}
        \EndIf
      \EndWhile
      \State{$y^{(\ell+1)}\gets y^{(\ell)} + \lambda^{(\ell)}s^{(\ell)}$}
      \State{$V^{(\ell+1)}\gets(1-a^{(\ell)})V^{(\ell)}+a^{(\ell)}v(y^{(\ell+1)})$ for $a^{(\ell)}\in[a_{\min},1]$}
    \EndFor
  \end{algorithmic}
\end{algorithm}
Although \cite{kouri2026tr} is focused on the case when $\phi_1$ is Lipschitz continuous, \cite[Proposition~6]{kouri2026tr} applies to our case of extended real-valued $\phi_1$, ensuring that the sequence of dual variables (ignoring the stopping conditions on line~3) converges weakly to some $\bar{y}$ and, under additional assumptions, the primal sequence converges strongly to the proximity operator $\Proxrphi(z)=\Prox_{r\phi_0}(z-rD^*\bar{y})$.

The next result demonstrates that \Cref{alg:struct.prim.dual}, when terminated prematurely according to the duality-gap stopping condition
in line~3 produces a Type-2 (and hence Type-1) approximation of $\Proxrphi(z)$ with $\varepsilon$-precision.
\begin{proposition}\label{prop:primal.dual.inexact}
Fix $z\in X$ and $r,\,\varepsilon\in\R_{++}$, and let $\{(x^{(\ell)},y^{(\ell)})\}_\ell$ be a sequence generated by \Cref{alg:struct.prim.dual}.
Moreover, suppose $\bar{\ell}\in\mathbb{N}$ is the first iteration for which
\begin{equation}\label{eq:struct.prim.dual.gap}
\phi_1(Dx^{(\ell)}-b)+\phi_1^*(y^{(\ell)})-\inp{y^{(\ell)}}{Dx^{(\ell)}-b} \le \varepsilon^2/(2r)
\end{equation}
holds.
Then, $x^{(\bar{\ell})}\approx_2 \Proxrphi(z)$ with $\varepsilon$-precision.
\end{proposition}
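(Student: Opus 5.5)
Write $u\coloneqq x^{(\bar\ell)}$, $y\coloneqq y^{(\bar\ell)}$ and $\tau\coloneqq\varepsilon^2/(2r)$. By \eqref{eq:type2-error}, we must show $\frac1r(z-u)\in\partial_\tau\phi(u)$, i.e.,
\[
\phi(w)\ge\phi(u)+\inp{r^{-1}(z-u)}{w-u}-\tau\quad\forall\,w\in X.
\]
The plan is to split $r^{-1}(z-u)$ into an exact subgradient of $\phi_0$ and an approximate subgradient of $\phi_1(D\cdot-b)$, with the whole error coming from the duality gap in \eqref{eq:struct.prim.dual.gap}.

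First, the primal update on line~2 gives $u=\Prox_{r\phi_0}(z-rD^*y)$. By \eqref{eq:fermat-prox-a} this means $r^{-1}(z-rD^*y-u)\in\partial\phi_0(u)$. Equivalently,
\[
\frac1r(z-u)=\xi_0+D^*y\quad\text{with}\quad\xi_0\in\partial\phi_0(u).
\]
This step is exact. Note also that the gap condition forces $\phi_1(Du-b)<+\infty$ and $\phi_1^*(y)<+\infty$, so all the quantities below are finite.

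Second, I will show that $y\in\partial_\tau\phi_1(Du-b)$. Set $v\coloneqq Du-b$. The Fenchel--Young inequality gives $\phi_1(v')\ge\inp{y}{v'}-\phi_1^*(y)$ for every $v'\in Y$. Subtracting $\phi_1(v)$ and applying \eqref{eq:struct.prim.dual.gap} yields
\[
\phi_1(v')\ge\phi_1(v)+\inp{y}{v'-v}-\bigl[\phi_1(v)+\phi_1^*(y)-\inp{y}{v}\bigr]\ge\phi_1(v)+\inp{y}{v'-v}-\tau.
\]
Now take $v'=Dw-b$. Then $\inp{y}{v'-v}=\inp{D^*y}{w-u}$, so $D^*y$ is a $\tau$-subgradient of $w\mapsto\phi_1(Dw-b)$ at $u$.

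Finally, add the exact subgradient inequality for $\phi_0$ at $u$ (with $\xi_0$) to the $\tau$-subgradient inequality just obtained. This gives
\[
\phi(w)\ge\phi(u)+\inp{\xi_0+D^*y}{w-u}-\tau=\phi(u)+\inp{r^{-1}(z-u)}{w-u}-\tau\quad\forall\,w\in X,
\]
which is exactly the Type-2 condition. The Type-1 claim then follows from \Cref{prop:salzo.error.equival}.

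The argument is short. The only real work is recognizing that the duality gap in \eqref{eq:struct.prim.dual.gap} is precisely the Fenchel--Young gap that measures how far $y$ is from being a subgradient. Note also that we only need $\bar\ell$ to be an iteration at which \eqref{eq:struct.prim.dual.gap} holds; that it is the first such iteration plays no role.
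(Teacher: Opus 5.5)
Your proof is correct and is essentially the paper's argument. The paper also uses the supremum defining $\phi_1^*$ (i.e., Fenchel--Young) together with the gap condition to get $D^*y^{(\bar\ell)}\in\partial_\tau\hat{\phi}_1(x^{(\bar\ell)})$ for $\hat{\phi}_1=\phi_1(D\cdot-b)$. It then adds the exact inclusion $\frac{1}{r}(z-rD^*y^{(\bar\ell)}-x^{(\bar\ell)})\in\partial\phi_0(x^{(\bar\ell)})$ from the prox step and concludes via $\partial_\tau\hat{\phi}_1+\partial\phi_0\subseteq\partial_\tau\phi$.
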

\begin{proof}
    Let $\tau=\varepsilon^2/(2r)$ and denote $\hat{\phi}_1(x)=\phi_1(Dx-b)$.
    The primal-dual iterate $(x^{(\bar{\ell})},y^{(\bar{\ell})})$ satisfies the inequality
    \begin{align*}
    \tau - \phi_1(Dx^{(\bar\ell)}-b)+\inp{y^{(\bar\ell)}}{Dx^{(\bar\ell)}-b} &\geq \phi_1^*(y^{(\bar\ell)})= \sup_{w\in Y}\{\inp{y^{(\bar\ell)}}{w} - \phi_1(w)\},
    \end{align*}
    which implies
    \begin{align*}
    \phi_1(Dx-b) &\geq  \phi_1(Dx^{(\bar\ell)}-b) + \inp{y^{(\bar\ell)}}{D(x-x^{(\bar\ell)})} - \tau \\
    & = \phi_1(Dx^{(\bar\ell)}-b) + \inp{D^*y^{(\bar\ell)}}{x-x^{(\bar\ell)}} - \tau \quad\forall\,x\in X
    \end{align*}
and therefore, $D^*y^{(\bar\ell)}\in \partial_\tau\hat{\phi}_1(x^{(\bar\ell)})$.  Moreover, the primal iterate in \eqref{eq:struct.prim.dual} satisfies
\[
  \frac{1}{r}[(z-rD^*y^{(\bar\ell)})-x^{(\bar\ell)}]\in\partial\phi_0(x^{(\bar\ell)})
\]
by definition of the proximity operator.  Consequently,
\[
\begin{aligned}
  \frac{1}{r}(z-x^{(\bar\ell)}) &= D^*y^{(\bar\ell)} + \frac{1}{r}[(z-rD^*y^{(\bar\ell)})-x^{(\bar\ell)}] \\
  &\in \partial_\tau\hat{\phi}_1(x^{(\bar\ell)})+\partial\phi_0(x^{(\bar\ell)})
  \subseteq\partial_\tau\phi(x^{(\bar\ell)})
  \end{aligned}
\]
and hence \eqref{eq:type2-error} holds.
\end{proof}
When $\phi_1$ is the support function of a nonempty, closed and convex set $C\subseteq Y$, the procedure in \eqref{eq:struct.prim.dual} simplifies further. 
In this setting, $\phi_1^*$ is the indicator function of $C$
and \eqref{eq:struct.prim.dual.gap} becomes
\begin{equation}\label{eq:support.fnt.stop}
-\frac{\varepsilon^2}{2r} \le \inp{y^{(\ell)}}{Dx^{(\ell)}-b} - \phi_1(Dx^{(\ell)}-b) \le 0.
\end{equation}
In addition, \eqref{eq:struct.prim.dual} ensures that each iterate is primal feasible with respect to $\textup{dom}\,\phi_0$ as well as dual feasible, i.e., $y^{(\ell)}\in C$.
We utilize these features in the numerical results to compute the proximity operator of
the sum of the total-variation semi-norm and the indicator function of a convex set.

\subsection{Weighted Inner Products}
\label{ss:weighted.prod}
We now discuss a method for computing the proximity operator using alternative inner products.
This application arises, e.g., when solving discretized infinite-dimensional problems.
In these problems, the discretized (i.e., finite-dimensional) inner product involves a symmetric positive-definite matrix.
Typically, this matrix complicates the proximity operator computation, even if the proximity operator has an analytical form when defined by an alternative inner product such as the Euclidean inner product.
The results in this section first appeared in the technical reports \cite{dprox,maia2026inexactweightedproximaltrustregion}.

Let  $a:X\times X\rightarrow \mathbb{R}$ be a symmetric, coercive, and continuous bilinear form, i.e., there exist constants
$0<\alpha_1\le\alpha_2<\infty$ such that
\[
a(x,y)=a(y,x), \qquad
\alpha_1\|x\|^2 \le a(x,x), \qquad
|a(x,y)| \le \alpha_2\|x\|\|y\|
\qquad \forall\,x,y\in X.
\]
The Lax-Milgram Lemma ensures that $a$ is associated with the invertible, 
self-adjoint, positive, and continuous linear operator $A:X\to X$ given by 
\[
\langle Ax,y\rangle = a(x,y) \quad \forall\,x,\,y\in X.
\]
Here $a(\cdot,\cdot)$ defines an inner product on $X$ with norm $\norm{\cdot}_a = \sqrt{a(\cdot,\cdot)}$. 
We present a method for computing Type-1 approximations to $\Proxrphi(z)$ with $\varepsilon$-precision using the alternative  proximity operator defined by the $A$-weighted inner product, i.e.,
\[
  \Proxrphi^a(x) \coloneqq \operatorname*{arg\,min}_{y\in X} \left\{\phi(y) + \frac{1}{2r}\|y - x\|_a^2\right\},
\]
which we assume to have an analytical form.
\Cref{alg:inex.M.prox} computes a Type-3 approximation (and hence a Type-1 approximation) using 
the $A$-weighted proximal gradient algorithm.  Ignoring the stopping conditions, \cite[Corollary~28.9]{bauschke2017monotone} ensures that the iterates $\{x^{(\ell)}\}$
converge strongly to $\Proxrphi(x)$.
\begin{algorithm}[htb!]
\caption{Weighted Proximal Gradient}
\begin{algorithmic}[1]
\Require{$z\in X$ and $r,\,\delta\in\R_{++}$}
\State $x^{(0)}\gets \Prox^a_{r\phi}(z)$
\For{$\ell=1,2,\ldots$}
  \State{$x^{(\ell)}\gets \Prox_{r\phi}^a(x^{(\ell-1)}-A^{-1}(x^{(\ell-1)}-z))$}
\If{$\|x^{(\ell)}-x^{(\ell-1)}\|_a\le\delta$}
  \State{{\bf break}}
\EndIf
\EndFor
\end{algorithmic}
\label{alg:inex.M.prox}
\end{algorithm}
Let $\bar{\ell}\in\mathbb{N}$ denote the first iteration for which $\|x^{(\bar\ell)}-x^{(\bar\ell-1)}\|_a\le\delta$.
The next result demonstrates that $x^{(\bar\ell)}$ satisfies a similar error bound to that in \Cref{lem:inexac.exact.prox.diff}.

\begin{theorem}\label{app:lemma.tech}
\Cref{alg:inex.M.prox} converges in finitely many iterations.  
Moreover, if $\alpha_1 < \sqrt{2}$ and \Cref{alg:inex.M.prox} exits at iteration $\bar\ell\in\mathbb{N}$, i.e., 
\begin{equation}\label{eq:lemma.tech.1}
  \|x^{(\bar\ell)} - x^{(\bar\ell-1)}\|_a \le \delta,
\end{equation}
then the following error bounds hold:
\[
  \|x^{(\bar\ell-1)}-\Proxrphi(z)\|\le \alpha_1^{-1/2}\left(1-\tfrac{1}{2}\alpha_1^{2}\right)^{-1}\delta
\]
and
\[
  \|x^{(\bar\ell)}-\Proxrphi(z)\|\le \alpha_1^{-1/2}((1-\tfrac{1}{2}\alpha_1^2)^{-1}+1)\delta.
\]
\end{theorem}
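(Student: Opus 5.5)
The proof rests on viewing \Cref{alg:inex.M.prox} as a fixed-point iteration in the $a$-geometry. Write $h(y)\coloneqq\phi(y)+\frac{1}{2r}\|y-z\|^2$. In the $a$-inner product, the smooth part of $h$ has gradient $\frac{1}{r}A^{-1}(y-z)$. Hence the update $x^{(\ell)}=T(x^{(\ell-1)})$ with
\[
T(x)\coloneqq\Prox^a_{r\phi}\bigl(x-A^{-1}(x-z)\bigr)
\]
is exactly a proximal gradient step with step length $r$ for $h$ in $(X,a)$. By Fermat's rule in the $a$-metric, the fixed points of $T$ are the minimizers of $h$, so $p\coloneqq\Proxrphi(z)$ is the unique fixed point of $T$.

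For finite termination I will use \cite[Corollary~28.9]{bauschke2017monotone}, already quoted before the algorithm, which gives $x^{(\ell)}\to p$ strongly. Then $\|x^{(\ell)}-x^{(\ell-1)}\|_a\to0$ by the norm equivalence $\alpha_1\|v\|^2\le\|v\|_a^2\le\alpha_2\|v\|^2$. Since $\delta>0$, the stopping test on line~4 is eventually met.

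For the error bounds, the plan is to establish a contraction estimate
\begin{equation*}
\|T(x)-p\|_a\le q\,\|x-p\|_a,\qquad q=\tfrac12\alpha_1^2,
\end{equation*}
which is $<1$ exactly when $\alpha_1<\sqrt2$. This explains the hypothesis. Granting the estimate, apply it with $x=x^{(\bar\ell-1)}$ and use the triangle inequality:
\[
\|x^{(\bar\ell-1)}-p\|_a\le\|x^{(\bar\ell-1)}-x^{(\bar\ell)}\|_a+\|x^{(\bar\ell)}-p\|_a\le\delta+q\|x^{(\bar\ell-1)}-p\|_a .
\]
Rearranging gives $\|x^{(\bar\ell-1)}-p\|_a\le(1-\tfrac12\alpha_1^2)^{-1}\delta$. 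The coercivity bound $\|v\|\le\alpha_1^{-1/2}\|v\|_a$ then yields the first estimate in $X$. The second estimate follows from one more triangle inequality:
\[
\|x^{(\bar\ell)}-p\|\le\|x^{(\bar\ell)}-x^{(\bar\ell-1)}\|+\|x^{(\bar\ell-1)}-p\|,
\]
where the first term is at most $\alpha_1^{-1/2}\delta$ by \eqref{eq:lemma.tech.1}.

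The main obstacle is the contraction estimate itself. My first attempt will combine two facts. First, $\Prox^a_{r\phi}$ is firmly nonexpansive in $\|\cdot\|_a$. Second, the forward map $x\mapsto x-A^{-1}(x-z)$ has linear part $I-A^{-1}$. Both $T(x)$ and $p=T(p)$ are images under $\Prox^a_{r\phi}$, so it suffices to bound $\|(I-A^{-1})v\|_a^2$ with $v=x-p$. Expanding gives
\[
\|(I-A^{-1})v\|_a^2=a(v,v)-2\|v\|^2+\inp{A^{-1}v}{v}.
\]
I would then control this using the strong monotonicity of the subdifferential of the $\frac1r$-strongly convex $h$, which is where the factor $\alpha_1^2/2$ should emerge.

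If this direct route gives a different constant, I will fall back on the optimality characterization instead. The prox optimality at iteration $\ell$ gives $(A-I)(x^{(\ell-1)}-x^{(\ell)})/r\in\partial\phi_z^{(r)}(x^{(\ell)})$. So \Cref{lem:delta-prox-char} shows that $x^{(\ell)}=\Proxrphi(z+e)$ with $e=(A-I)(x^{(\ell-1)}-x^{(\ell)})$. I would then pair this with the strong monotonicity in \Cref{lem:prox-dec} to extract a bound on $\|x^{(\bar\ell-1)}-p\|$ in terms of $\delta$. That bound should reduce to the stated constant, though I have not checked this.
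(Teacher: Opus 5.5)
Your reduction (unique fixed point $p$ of $T$, triangle inequality, conversion via $\|v\|\le\alpha_1^{-1/2}\|v\|_a$) matches how the paper finishes. The estimate everything rests on, $\|T(x)-p\|_a\le\tfrac12\alpha_1^2\|x-p\|_a$, is false, and it is not what the paper uses. Already for $\phi=0$ you have $\Prox^a_{r\phi}=I$, $p=z$, and $T(x)-p=(I-A^{-1})(x-p)$. Your own expansion, evaluated spectrally, shows that the $\|\cdot\|_a$-operator norm of $I-A^{-1}$ is exactly $q_\ast\coloneqq\sup_{\lambda\in\sigma(A)}|1-\lambda^{-1}|$. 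Take $X=\R^2$ with the Euclidean product and $A=\mathrm{diag}(1,5)$, so $\alpha_1=1$ and $\alpha_2=5$. Then $q_\ast=4/5>1/2$, and $q_\ast\to1$ as the large eigenvalue grows with $\alpha_1$ fixed. Since this computation is exact, neither firm nonexpansiveness nor strong convexity of $h$ can bring it down to $\tfrac12\alpha_1^2$. The paper instead cites \cite[Lemma~2]{baraldi2024local} for strong monotonicity of $G_a(\cdot,r)=\tfrac1r(I-T)$ in the $a$-inner product, with modulus $\tfrac1r(1-\tfrac12\alpha_1^2)$. Together with $G_a(\bar x,r)=0$ and Cauchy--Schwarz, this gives $(1-\tfrac12\alpha_1^2)\|x^{(\ell)}-\bar x\|_a\le\|x^{(\ell+1)}-x^{(\ell)}\|_a$ directly. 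A $q$-contraction would make $I-T$ strongly monotone with modulus $1-q$, so your route asks for strictly more than the paper's.

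More seriously, no argument can reach the stated constant, because the first bound itself fails. In the same example take $r=1$, $\phi=\tfrac{c}{2}\|\cdot\|^2$ with $c=10^{-2}$, and $z=(0,1)$.
Then:
\begin{itemize}
\item $\Proxrphi(z)=(0,\tfrac{1}{1+c})$ and $x^{(0)}=(0,\tfrac{5}{5+c})$;
\item $T$ keeps the first coordinate at $0$ and acts on the second by $x_2\mapsto(4x_2+1)/(5+c)$;
\item with $\delta\coloneqq\|x^{(1)}-x^{(0)}\|_a$, the loop exits at $\bar\ell=1$ with $\|x^{(0)}-\Proxrphi(z)\|=\tfrac{5+c}{\sqrt{5}(1+c)}\delta\approx2.22\,\delta>2\delta$.
\end{itemize}
The $a$-monotonicity modulus of $I-T$ in that direction is $(1+c)/(5+c)\approx0.2<\tfrac12$, so the cited modulus fails as well. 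With $A=\mathrm{diag}(1,\lambda)$ the ratio tends to $\sqrt{\lambda}$, so any valid bound must involve $\alpha_2$. Your fallback therefore cannot reduce to the stated constant.

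What your ideas do prove is the following. The fallback (\Cref{lem:M.weight.1} plus nonexpansiveness of $\Proxrphi$) gives $\|x^{(\bar\ell)}-\Proxrphi(z)\|\le\|(A-I)(x^{(\bar\ell-1)}-x^{(\bar\ell)})\|\le\alpha_1^{-1/2}\max\{|1-\alpha_1|,|\alpha_2-1|\}\,\delta$. Your first attempt, carried out correctly, shows that $T$ is a $q_\ast$-contraction in $\|\cdot\|_a$, with $q_\ast\le\max\{|1-\alpha_1^{-1}|,|1-\alpha_2^{-1}|\}<1$ whenever $\alpha_1>\tfrac12$. That yields finite termination by Banach's fixed-point theorem, and both of your bounds with $\tfrac12\alpha_1^2$ replaced by $q_\ast$.

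This lower bound on $\alpha_1$, not $\alpha_1<\sqrt{2}$, is the relevant hypothesis. Applied in $(X,a)$, \cite[Corollary~28.9]{bauschke2017monotone} needs the step $r$ to satisfy $r<2r\lambda_{\min}(A)$. For $A=\tfrac{1}{10}I$ with the same $\phi$ and any $z\neq0$, the iteration is affine with slope $-0.9/0.11$ and $x^{(0)}\neq p$. It therefore diverges, and for small $\delta$ the loop never exits. Your finite-termination step, like the paper's, needs this hypothesis too.
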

\begin{proof}
First, suppose that \Cref{alg:inex.M.prox} produces infinitely many iterations (i.e., \eqref{eq:lemma.tech.1} is never satisfied).
Then, \cite[Corollary~28.9]{bauschke2017monotone} ensures that $x^{(\ell)}$ converges strongly to $\bar{x}=\Proxrphi(z)$, resulting in a contradiction.
Therefore, there exists $\bar\ell$ for which \eqref{eq:lemma.tech.1} holds for any given $\delta>0$.
Denote the $A$-weighted proximal gradient operator by
\[
  G_a(x,t) = \tfrac{1}{t}\left(x-\Prox_{t\phi}^a\left(x-\tfrac{t}{r}A^{-1}(x-z)\right)\right)
\]
and note that $x\mapsto G_a(x,t)$ is strongly monotone for all $t\in(0,2r/\alpha_1^2)$
\cite[Lemma~2]{baraldi2024local}.
Consequently, $x\mapsto G_a(x,r)$ is strongly monotone since $\alpha_1 < \sqrt{2}$ and
\[
  a(G_a(x^{(\ell)},r)-G_a(\xb,r),x^{(\ell)}-\xb) \ge \tfrac{1}{r}\left(1-\tfrac{1}{2}\alpha_1^{2}\right)\|x^{(\ell)}-\xb\|_a^2.
\]
The optimality of $\xb$ for $\phi_x^r$ ensures that $G_a(\xb,r)=0$ and so
\[
  \tfrac{1}{r}\left(1-\tfrac{1}{2}\alpha_1^{2}\right)\|x^{(\ell)}-\xb\|_a \le \|G_a(x^{(\ell)},r)\|_a = \tfrac{1}{r}\|x^{(\ell+1)}-x^{(\ell)}\|_a.
\]
The error bound for $x^{(\bar\ell-1)}$ follows from the above bound with $\ell=\bar\ell-1$, \eqref{eq:lemma.tech.1} and the equivalence of $\|\cdot\|_a$ and $\|\cdot\|$, while the error bound for $x^{(\ell)}$ follows by the triangle inequality.
\end{proof}

The final result in this section
demonstrates that \Cref{alg:inex.M.prox} produces $x^{(\bar\ell)}$ satisfying
$0\in\hat{\partial}_\delta\phi_x^{(r)}(x^{(\bar\ell)})$ and therefore, is a type-3 approximation according to \Cref{lem:delta-prox-char}.  Before proving this, we require the following technical lemma.
\begin{lemma}\label{lem:M.weight.1}
Consider the sequence $\{x^{(\ell)}\}$ generated by \Cref{alg:inex.M.prox}. Then,
\begin{equation}\label{eq:M.weight.1}
\frac{1}{r}(A-I)(x^{(\ell-1)}-x^{(\ell)}) \in \partial\phi^{(r)}_z(x^{(\ell)}),
\end{equation}
which, by definition, is equivalent to
\begin{align}
\phi^{(r)}_z(x) 
&\ge \phi^{(r)}_z(x^{(\ell)}) + \frac{1}{r}\inner{(A-I)(x^{(\ell-1)}-x^{(\ell)})}{x-x^{(\ell)}}
\quad\forall\,x\in X.
\label{eq:diff}
\end{align}
\end{lemma}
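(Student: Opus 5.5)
We will derive the inclusion \eqref{eq:M.weight.1} from the optimality condition of the weighted proximity operator in line~3 of \Cref{alg:inex.M.prox}, and then translate it into the standard inner product. The inequality \eqref{eq:diff} is then just the definition of the subdifferential \eqref{eq:subgradient.exact} applied to $\phi_z^{(r)}$, so all the work is in the inclusion.

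First we characterize the weighted proximity operator. The function $y\mapsto \phi(y)+\frac{1}{2r}\|y-w\|_a^2$ has, by the sum rule (the quadratic part is continuous and Fr\'echet differentiable), subdifferential $\partial\phi(y)+\frac{1}{r}A(y-w)$ with respect to $\inp{\cdot}{\cdot}$. This holds because the gradient of $\frac{1}{2r}\inp{A(y-w)}{y-w}$ in the $X$ inner product is $\frac1r A(y-w)$, using the self-adjointness of $A$. Hence Fermat's principle gives
\[
  p=\Prox^a_{r\phi}(w)\iff \tfrac{1}{r}A(w-p)\in\partial\phi(p).
\]

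Next we apply this with $w=x^{(\ell-1)}-A^{-1}(x^{(\ell-1)}-z)$ and $p=x^{(\ell)}$. We compute
\[
A(w-x^{(\ell)}) = A(x^{(\ell-1)}-x^{(\ell)}) - (x^{(\ell-1)}-z),
\]
and therefore
\[
  \tfrac1r\bigl[A(x^{(\ell-1)}-x^{(\ell)}) - (x^{(\ell-1)}-z)\bigr]\in\partial\phi(x^{(\ell)}).
\]
Adding $\frac1r(x^{(\ell)}-z)$, the gradient of the quadratic part of $\phi_z^{(r)}$ at $x^{(\ell)}$, and using $\partial\phi_z^{(r)}(u)=\partial\phi(u)+\frac1r(u-z)$, we obtain
\[
\tfrac1r\bigl[A(x^{(\ell-1)}-x^{(\ell)}) - x^{(\ell-1)} + x^{(\ell)}\bigr]=\tfrac1r(A-I)(x^{(\ell-1)}-x^{(\ell)})\in \partial\phi_z^{(r)}(x^{(\ell)}).
\]
This is \eqref{eq:M.weight.1}.

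The only step requiring care is the subdifferential sum rule and the correct identification of the gradient of $\|\cdot\|_a^2$ in the $X$ inner product (it is $A$, not $I$). The sum rule applies because one summand is a continuous convex function. Everything else is algebra.
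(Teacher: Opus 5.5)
Your proof is correct and follows essentially the same route as the paper: both apply the optimality condition of the weighted proximity operator at $x^{(\ell)}$ and then add the gradient $\frac1r(x^{(\ell)}-z)$ of the quadratic part of $\phi_z^{(r)}$. The only difference is cosmetic. The paper writes the optimality condition in the $a$-geometry via $\partial_a\phi=A^{-1}\partial\phi$ and implicitly multiplies by $A$ at the end, whereas you work in the $X$ inner product from the start, which makes the sum-rule and self-adjointness justifications explicit.
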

\begin{proof}
Denote the subdifferential of $\phi$ with respect to the $A$-weighted inner product by $\partial_a\phi$ and note that $\partial_a\phi=A^{-1}\partial\phi$.
Optimality of the proximity operator implies
\begin{align*}
\frac{1}{r}(x^{(\ell-1)}-x^{(\ell)}-A^{-1}(x^{(\ell-1)}-z)) \in \partial_a \phi(x^{(\ell)}) 
\end{align*}
and adding $(1/r)A^{-1}(x^{(\ell)}-z)$ to both sides of this inclusion yields
\begin{align*}
\frac{1}{r}(x^{(\ell-1)}-x^{(\ell)}-A^{-1}(x^{(\ell-1)}-x^{(\ell)})) \in 
A^{-1}\partial\phi^{(r)}_z(x^{(\ell)}).
\end{align*}
This is equivalent to \eqref{eq:M.weight.1}, concluding the proof. 
\end{proof}

Using \Cref{lem:M.weight.1}, we can now prove that if $x^{(\bar\ell)}$ satisfies the stopping conditions of \Cref{alg:inex.M.prox}, then it is a Type-3 approximation of $\Proxrphi(z)$ with $\varepsilon$-precision.
\begin{theorem}\label{thm:inexact.M.prox}
Let $\bar\ell\in\mathbb{N}$ be the smallest iteration index for which $x^{(\bar\ell)}$ satisfies the $\norm{x^{(\bar\ell)} - x^{(\bar\ell-1)}}_a\le \delta$. 
If
\(
  \delta \le \varepsilon\sqrt{\alpha_1} / (1+\alpha_2),
\)
then $x^{(\bar\ell)}$ is a Type-3 approximation of $\Proxrphi(z)$ with $\varepsilon$-precision.
\end{theorem}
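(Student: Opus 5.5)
The plan is to use \Cref{lem:M.weight.1} to exhibit an explicit element of $\partial\phi_z^{(r)}(x^{(\bar\ell)})$. We then bound its norm by $\varepsilon/r$, which is exactly the Type-3 condition \eqref{eq:type3-error} (equivalently, condition~\ref{lem:delta-prox-char2} of \Cref{lem:delta-prox-char} with $\delta$ replaced by $\varepsilon/r$). Concretely, \Cref{lem:M.weight.1} with $\ell=\bar\ell$ gives
\[
  \xi \coloneqq \frac{1}{r}(A-I)(x^{(\bar\ell-1)}-x^{(\bar\ell)}) \in \partial\phi_z^{(r)}(x^{(\bar\ell)}).
\]
In particular this subdifferential is nonempty, and
\[
  \inf_{\eta\in\partial\phi_z^{(r)}(x^{(\bar\ell)})}\|\eta\| \le \|\xi\|.
\]

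The key estimate is a bound on $\|\xi\|$. By the triangle inequality and the continuity bound on $a$, which gives $\|A\|\le\alpha_2$ since $\|Ax\|^2=a(x,Ax)\le\alpha_2\|x\|\|Ax\|$, we get $\|A-I\|\le 1+\alpha_2$. Therefore
\[
  \|\xi\|\le \frac{1+\alpha_2}{r}\|x^{(\bar\ell-1)}-x^{(\bar\ell)}\|.
\]
Next we convert to the $a$-norm using coercivity. Since $\alpha_1\|x\|^2\le\|x\|_a^2$, we have $\|x\|\le\alpha_1^{-1/2}\|x\|_a$. Combining this with the stopping criterion \eqref{eq:lemma.tech.1} yields
\[
  \|\xi\|\le \frac{1+\alpha_2}{r\sqrt{\alpha_1}}\,\delta \le \frac{\varepsilon}{r},
\]
where the last step is precisely the hypothesis $\delta\le\varepsilon\sqrt{\alpha_1}/(1+\alpha_2)$. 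This gives \eqref{eq:type3-error} with $x=z$, so $x^{(\bar\ell)}\approx_3\Proxrphi(z)$ with $\varepsilon$-precision. By \Cref{prop:salzo.error.equival} it is also a Type-1 approximation.

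There is no real obstacle here. The only points needing care are the two constants: the norm of $A-I$, for which we use the crude bound $1+\alpha_2$ rather than a sharper spectral estimate, matching the stated $\delta$, and the direction of the norm equivalence, where coercivity gives $\|\cdot\|\le\alpha_1^{-1/2}\|\cdot\|_a$. Finite termination, so that $\bar\ell$ exists, is already guaranteed by \Cref{app:lemma.tech}.
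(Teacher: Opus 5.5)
Your proof is correct and follows the paper's argument: both use Lemma~\ref{lem:M.weight.1} to exhibit the explicit subgradient $\frac{1}{r}(A-I)(x^{(\bar\ell-1)}-x^{(\bar\ell)})\in\partial\phi_z^{(r)}(x^{(\bar\ell)})$ and bound its norm by $\frac{1+\alpha_2}{r\sqrt{\alpha_1}}\delta\le\varepsilon/r$. The paper then uses Cauchy--Schwarz to get $0\in\hat{\partial}_{\varepsilon/r}\phi_z^{(r)}(x^{(\bar\ell)})$ and invokes Lemma~\ref{lem:delta-prox-char}, whereas you read off the Type-3 infimum condition directly from the subgradient bound, which is slightly more direct.
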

\begin{proof}
Note that the existence of $\alpha_1$ and $\alpha_2$ ensures that
\[
\begin{aligned}
  \frac{1}{r}\|(A-I)(x^{(\bar\ell-1)}-x^{(\bar\ell)})\| &\le \frac{1+\alpha_2}{r} \|x^{(\bar\ell-1)}-x^{(\bar\ell)}\|
  \le \frac{1+\alpha_2}{r\sqrt{\alpha_1}}\|x^{(\bar\ell-1)}-x^{(\bar\ell)}\|_a \\
  &\le \frac{1+\alpha_2}{r\sqrt{\alpha_1}}\delta \le \frac{\varepsilon}{r}.
\end{aligned}
\]
Now, for any $x\in X$, we have that
\begin{align*}
\phi^{(r)}_z(x^{(\bar\ell)}) &\le \phi^{(r)}_z(x) + \frac{1}{r}\inner{(A-I)(x^{(\bar\ell-1)}-x^{(\bar\ell)})}{x-x^{(\bar\ell)}}\\
&\le \phi^{(r)}_z(x) + \frac{1}{r}\|(A-I)(x^{(\bar\ell-1)}-x^{(\bar\ell)})\| \|x-x^{(\bar\ell)}\| \\
&\le \phi^{(r)}_z(x) + \frac{\varepsilon}{r} \|x-x^{(\bar\ell)}\|,
\end{align*}
where we first applied \Cref{lem:M.weight.1}, then the Cauchy-Schwarz inequality, and finally the previous $\varepsilon$ bound.  The result then follows from \Cref{lem:delta-prox-char}.
\end{proof}

\section{Numerical Examples}
\label{sec:numerics}
We conduct two numerical examples: the optimal control of Burgers' equation with nontrivially weighted inner products and a density-based topology optimization example with $\phi$ of the form \eqref{eq:struc.nonsmooth}.
Throughout the numerical results, we use $\Delta_1 = 50$, $\eta_1 = 0.05$, $\eta_2 = 0.9$, $\gamma_1 = \gamma_2 = 0.25$, and $\gamma_3 = 2.5$.
For both examples, we stop \Cref{alg:cap} when
\begin{equation}\label{eq:num.stop}
  \psi_k(r_0) \le 10^{-5}.
\end{equation}
All studies were performed on a MacBook Pro running Sequoia 15.8
with 36 GB of memory and an M4 Max chip. 

\subsection{Optimal Control of Burgers' Equation}

We apply \Cref{alg:cap} to solve a discretized version of the following optimal control problem governed by Burgers' equation:
\begin{equation}
    \min_{z\in L^2(0,1)} \int_0^1([S(z)]-w)^2(x)\dx + \frac{\beta_1}{2}\int_0^1 z^2(x)\dx + \beta_2\int_0^1 |z|(x)\dx
\end{equation}
where $(0,1)$ is the physical domain, $\beta_1=10^{-4}$ and $\beta_2=10^{-2}$ are penalty parameters, $w(x)=-x^{2}$ is the target state, and $S(z)=u\in H^{1}(0,1)$ solves the weak form of Burgers' equation
\begin{eqnarray}
\label{eq:burgers}
    -\nu u'' + uu' = z+q \quad \text{in } (0,1), \quad 
    u(0)=0, \quad u(1)=-1.
\end{eqnarray}
Here, the viscosity is $\nu=0.08$ and the source is $q(x)=2(\nu+x^3)$. 
We discretize the state $u$ and $z$ using continuous piecewise linear finite elements on a uniform mesh with $n=512$ intervals ($h=1/512$).  
To compute $S(z)$, we solve the discretized Burgers' equation using Newton's method globalized with a backtracking line search.  
We exit the Newton iteration when the relative residual falls below $10^{-4}\sqrt{\epsilon_{\rm mach}}$, 
where $\epsilon_{\rm mach}$ is machine epsilon.
To approximately solve the trust-region subproblem \eqref{eq:tr-sub}, we employ the {\tt NCG} algorithm from \cite[Algorithm~4]{baraldi2024efficient}.

For the discretized problem, $X=\R^n$ endowed with the weighted inner product
\[
  \inp{x}{y}=\inp{x}{y}_M \coloneqq  x^\top M y = \sum_{i=1}^n\sum_{j=1}^n m_{i,j}x_i y_j,
\]
where $M\in\R^{n\times n}$ is the non-diagonal symmetric positive-definite (SPD) mass matrix.
This inner product represents a discretization of the infinite-dimensional $L^2(0,1)$ inner product, where $m_{i,j}$ is the $L^2(0,1)$ inner product of the $i$-th and $j$-th finite-element basis functions.
The fact that $M$ is not diagonal destroys separability for many $\phi$ like $\|\cdot\|_1$, complicating the evaluation of their proximity operators.
For this example, we leverage the framework described in \Cref{ss:weighted.prod}.
In particular, we set
\(
  a(x,y) = \inp{x}{y}_D \coloneqq  x^\top D y = \sum_{i=1}^n d_i x_i y_i,
\)
where $D=\textup{diag}(d)\in\R^{n\times n}$ is a positive diagonal matrix produced by lumping the mass matrix $M$.
The associated $A$-operator from \Cref{ss:weighted.prod} is given by $A = M^{-1}D$.
Moreover, the explicit forms of $M$ and $D$ are
\[
M=\frac{h}{6}\begin{pmatrix}
4 & 1 & \ldots & 0 & 0\\
1 & 4 & \ldots & 0 & 0\\
\vdots & \vdots & \ddots & \vdots &\vdots\\
0 & 0 & \ldots & 4 & 1\\
0 & 0 & \ldots & 1 & 4\\
\end{pmatrix} \in \mathbb{R}^{n\times n}\quad \text{and}\quad D=\frac{h}{6}\begin{pmatrix}
5 & 0 & \ldots & 0 & 0\\
0 & 6 & \ldots & 0 & 0 \\
\vdots & \vdots & \ddots & \vdots & \vdots\\
0 & 0 & \ldots & 6 & 0 \\
0 & 0 & \ldots & 0 & 5 \\
\end{pmatrix}\in \mathbb{R}^{n\times n},
\]
which yield the constants $\alpha_1=1$ and $\alpha_2=3$.
Furthermore, we approximate the $L^1(0,1)$-norm by the quantity
\[
  \beta_2\int_\Omega |z|(x)\dx \approx \phi(z)=\beta_2 h(\tfrac{5}{6} |z_1| + |z_2| + \ldots + |z_{n-1}| + \tfrac{5}{6}|z_n|),
\]
for which the $A$-weighted proximity operator is the usual soft-thresholding operator
\(
  \Proxrphi^a(z) = \textup{sign}(z)\odot\max\{|z|- \beta_2 r,0\}.
\)

In \Cref{tbl:1}, we summarize the performance of \Cref{alg:cap} for different values of the
inexact proximal gradient tolerance parameter $\kappa_{\textup{stat}}$ from \Cref{assump:inexact}. 
As $\kappa_{\textup{stat}}$ decreases, we require additional accuracy from the approximate proximity operator computed using \Cref{alg:inex.M.prox}.  
To generate these results, we cap the number of {\tt NCG} subproblem solver iterations to 15,
use the absolute and relative tolerances $\tau_{\textup{abs}}=10^{-5}$ and $\tau_{\textup{rel}}=10^{-3}$, respectively, and set
the spectral safeguards to $\lambda_{\min} = 10^{-12}$ and $\lambda_{\max}=10^{12}$.
The columns in \Cref{tbl:1} correspond to the value of $\kappa_{\textup{stat}}$, the wallclock time in seconds ({\tt time (s)}), the number of trust-region iterations ({\tt iter}), the number of evaluations of $f$ ({\tt nobj}), the number of evaluations of $\nabla f$ ({\tt ngrad}), the number of applications of the Hessian $\nabla^2 f$ ({\tt nhess}), the number of proximity operator evaluations ({\tt nprox}), and the average number of iterations of \Cref{alg:inex.M.prox} ({\tt av-piter}).
From \Cref{tbl:1}, we notice that different values of $\kappa_{\textup{stat}}$ have little effect on the performance of \Cref{alg:cap}, but have a drastic effect on the average number of iterations of \Cref{alg:inex.M.prox}.
For larger problems, this suggests that \Cref{alg:cap} can achieve significant savings by relaxing the required accuracy of the proximity operator evaluations.
\begin{table}[!ht]
\centering
{\ttfamily
\begin{tabular}{l r r r r r r r}
$\kappa_{\textup{stat}}$ & {\tt time (s)} & {\tt iter} & {\tt nobj} & {\tt ngrad} & {\tt nhess} & {\tt nprox} & {\tt av-piter} \\
   \hline
      {\tt 1e2  } & {\tt 0.21049}   &  {\tt 13 }  &  27  &    14 &     93  &   182 &         7.5385 \\
      {\tt 1e1  } & {\tt 0.14522}   &  {\tt 13 } &   27  &    14 &     93  &   182 &       7.5385 \\
      {\tt 1e0  } & {\tt 0.15767}    & {\tt 14 }  &  29   &   15  &   109   &  215  &       8.7143 \\
      {\tt 1e-1 } & {\tt 0.14283}   &  {\tt 14 } &   29  &    15 &    109  &   215 &             16 \\
      {\tt 1e-2 } & {\tt 0.13567}   &  {\tt 12 } &   25  &    13 &     77  &   149 &          17.75 \\
     {\tt 1e-3  } & {\tt 0.12378}    & {\tt 12 }  &  25   &   13  &    77   &  149  &       30.667 \\
     {\tt 1e-4  } & {\tt 0.14684}    & {\tt 14 } &   29   &   15  &   109   &  215  &           68 \\
     {\tt 1e-5 }  & {\tt 0.13407}    & {\tt 12 } & 25    &  13   &   77    & 149   &   84.417 \\
 \hline
\end{tabular}
}
\caption{Results for the optimal control of Burgers' equation with $\kappa_{\textup{stat}}\in\{10^{i}\,\vert\,i=-5,-4,\ldots,2\}$.
We report the wallclock time, 
\Cref{alg:cap} iterations, 
$f$  evaluations, $\nabla f$ evaluations, 
Hessian applications, 
proximity operator evaluations, and the average number of \Cref{alg:inex.M.prox} iterations.
}
\label{tbl:1}
\end{table}

In our final experiment, we incorporate inexact PDE solves in addition to inexact proximity operator evaluations,
terminating Newton's method for solving the discretized Burgers' equation \eqref{eq:burgers} when the relative residual falls below
\(
  \min\{10^{-2},\tau\},
\)
for $\tau$ guided by \Cref{alg:cap} as in \Cref{app:inexact}.
We choose $\kgrad=\kappa_{\textup{stat}}=1$ and $\kobj=10^3$ to balance the sizes of the initial tolerances passed to the PDE solver.
Since the dominant cost of Newton's method is the linear system solve at each iteration, we compare the average number of linear system solves per trust-region iteration for \Cref{alg:cap} using accurate and inexact PDE solves.
In particular, \Cref{alg:cap} averaged 5.3125 linear system solves per iteration when using inexact PDE solves, compared with 7.7222 when using exact PDE solves.

\subsection{Topology Optimization using the Total Variation}
The goal of topology optimization is to determine the distribution of a material within a domain $\Omega$ that minimizes, e.g., the compliance of the resulting structure, while satisfying a volume constraint.
Although uncommon in practice, one can promote sharp interfaces within the computed material density using total variation (TV) regularization \cite{sigmund2013topology,borrvall2001tv}.
The typical compliance minimization problem with TV regularization is
\begin{subequations}\label{eq:to-opt}
\begin{align}
  &\min_{\rho\in L^2(\Omega)} \; \int_{\Gamma_t} T(x)\cdot [S(\rho)](x)\,\mathrm{d}x + \beta_{\textup{TV}}\textup{TV}(\rho) \label{eq:to-opt-1} \\
  &\mathrm{subject\;to} \quad \int_\Omega \rho(x)\,\mathrm{d}x = v\vert\Omega\vert,\quad 0\le\rho\le 1\;\;\text{a.e.} \label{eq:to-opt-2}
\end{align}
\end{subequations}
For our results, we leverage the classical MBB example and implementation from \cite{andreassen2011efficient}, in which $\Omega=(0,150)\times(0,50)$ is the physical domain, $T$ is a point load of $-1$ in the vertical direction applied at $(0,50)$, $v=0.4$ is the volume
fraction,
\[
  \textup{TV}(\rho)\coloneqq \sup\left\{\int_\Omega \rho \,\textup{div}\,q\,\dx\,\Big\vert\, q\in C_c^1(\Omega,\R^2),\;\|q\|_{L^\infty(\Omega)} \le 1\right\},
\]
and $S(\rho)=u\in H^1(\Omega)^2$ solves the weak form of the linear elasticity equations
\begin{subequations}\label{eq:to-pde}
\begin{align}
  -\nabla\cdot(K(\rho):\varepsilon) &=0 &&\text{in $\Omega$} \\
  \varepsilon &= \tfrac{1}{2}(\nabla u + \nabla u^\top) &&\text{in $\Omega$}
\end{align}
\end{subequations}
Here,
\(
  K(\rho) \coloneqq [\kappa_{\min} + (1-\kappa_{\min})\mathbb{F}(\rho)^3] K_0,
\)
$K_0$ the standard isotropic elasticity matrix with Young's modulus 200 and Poisson ratio 0.29, $\mathbb{F}$ is the Helmholtz filter with radius 0.1 \cite{lazarov2011filters}, and $\kappa_{\min}=10^{-4}$.
Our boundary conditions and discretization are the same as \cite{andreassen2011efficient}.
We discretize the displacements $u$ and the filtered density $\mathbb{F}(\rho)$ using continuous piecewise linear finite elements on a $150\times 50$ uniform
quadrilateral mesh and the density $\rho$ using piecewise constants on the same mesh, resulting in $n=7500$ density degrees of freedom.
After discretization, the nonsmooth term is
\[
  \phi(\rho)=\left\{\begin{array}{ll}
    \beta_{\textup{TV}}\sum_{i=1}^n\|[D\rho]_i\|_2 & \text{if $\rho\in[0,1]^n$ and $\sum_{i=1}^n \rho_i = v|\Omega|$} \\
    +\infty &\text{otherwise}
    \end{array}\right.
\]
and $D$ is the differentiation matrix; cf.~\cite[Equation 15]{borrvall2001tv}.
Note that $\phi$ has the form \eqref{eq:struc.nonsmooth} and consequently, we will leverage \Cref{alg:struct.prim.dual} to approximate the proximity operator.

We compare the solutions to \eqref{eq:to-opt} with $\beta_{\textup{TV}}=0$ and $\beta_{\textup{TV}}=10^{-4}$.
For each experiment we employ the feasible initial guess $\rho\equiv v$.
To compute trial iterates in \Cref{alg:cap}, we employ the {\tt SPG2} subproblem solver with a maximum of 20 iterations, spectral safeguards $\lambda_{\min} = 10^{-4}$
and $\lambda_{\max}=10^{4}$, and absolute and relative tolerances $\tau_{\textup{abs}}=10^{-5}$ and $\tau_{\textup{rel}}=10^{-3}$, respectively.
We run experiments with $\beta_{\textup{TV}} = 0$, 
$\beta_{\textup{TV}} = 10^{-4}$ with inexact proximity operator evaluations,
and $\beta_{\textup{TV}} = 10^{-4}$ with high-accuracy proximity operator evaluations.
For both $\beta_{\textup{TV}}=10^{-4}$ experiments, we evaluate the proximity operator using \Cref{alg:struct.prim.dual}.
In the high-accuracy case, we exit \Cref{alg:struct.prim.dual} based on the primal-dual gap with $\varepsilon=10^{-6}$.
Attempts to drive it down further only increase the computational time without affecting the quality of the result.
In contrast, for the inexact proximity operator evaluations, we let \Cref{alg:cap} determine the $\varepsilon$ for the primal-dual gap and we add an additional stopping condition to exit the iteration if \eqref{eq:decr.inexact} is satisfied with $\kappa_{\textup{desc}}=0.75$.
In the context of \crefpart{assump:inexact}{assump:inexact.Psi}, we set $\kappa_{\rm stat} = 10^3$.
We further note that larger values of $\beta_{\textup{TV}}$ produced overly blurry designs, while smaller values had little effect on the final design compared to $\beta_{\textup{TV}}=10^{-4}$. 
\Cref{fig:tv} depicts the final designs for all three runs as well as the difference between the $\beta_{\textup{TV}}=10^{-4}$ run using inexact proximity operator evaluations and the $\beta_{\textup{TV}}=0$ run.
Note the improved inside corners for the $\beta_{\textup{TV}}=10^{-4}$ designs, resulting in a more intuitive and practical design. 
\begin{figure}[!htb]
    \centering
    \subfloat[$\beta_{\textup{TV}}=0$\label{fig:notv}]{\includegraphics[width=0.48\linewidth]{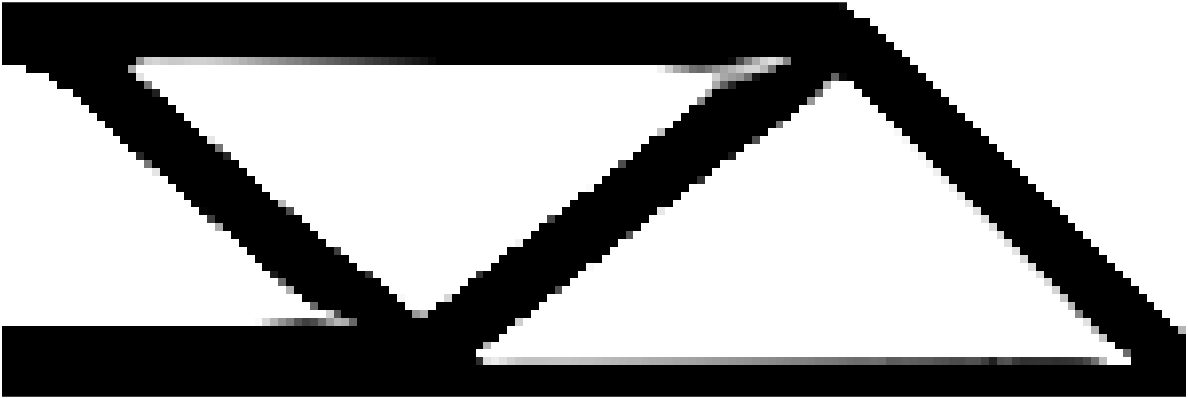}}
    \subfloat[$\beta_{\textup{TV}}=10^{-4}$, inexact]{\includegraphics[width=0.48\linewidth]{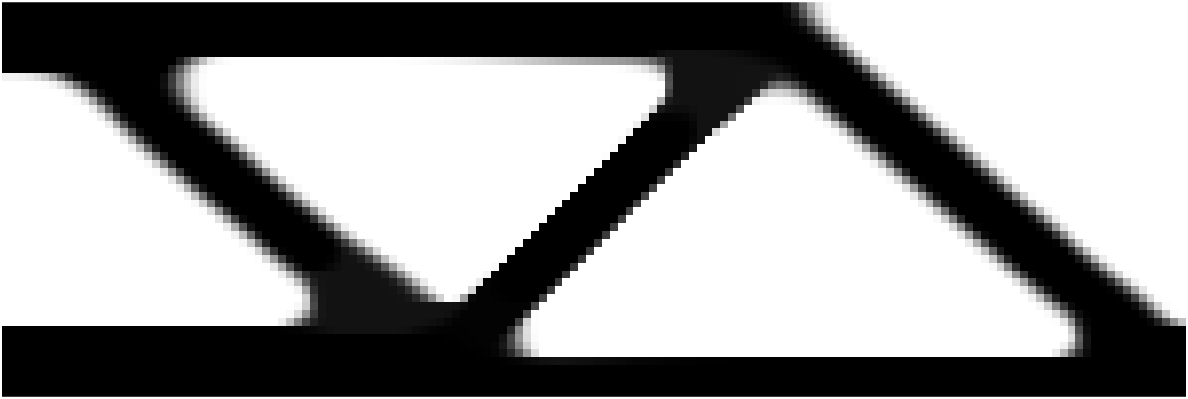}}\\
    \subfloat[$\beta_{\textup{TV}}=10^{-4}$, high accuracy]{\includegraphics[width=0.48\linewidth]{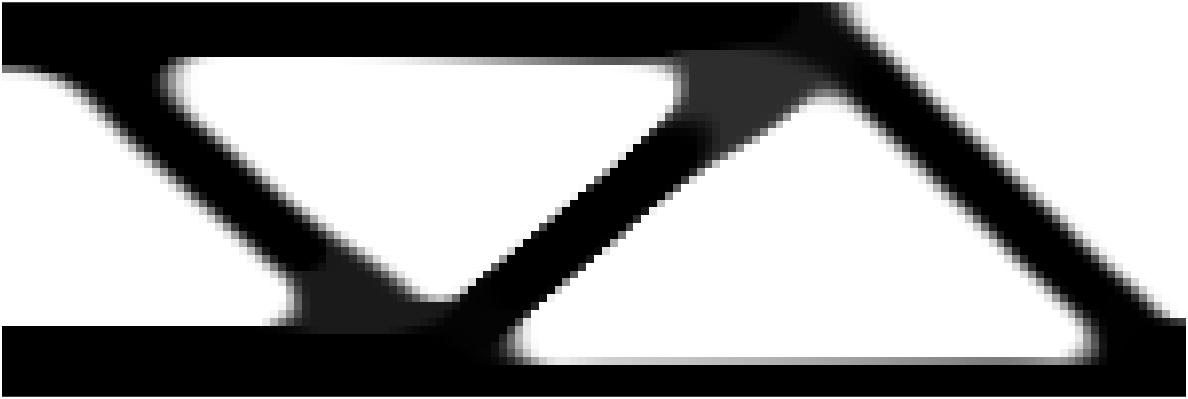}}
    \subfloat[Difference between $\beta_{\textup{TV}}=0$ case and inexact TV]{\includegraphics[width=0.48\linewidth]{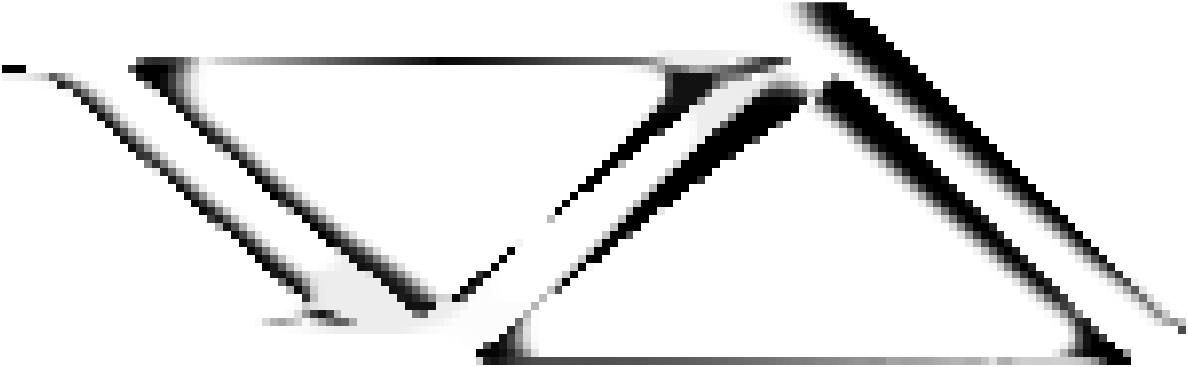}}
    \caption{Final design for $\beta_{\textup{TV}} = 0$ and $\beta_{\textup{TV}} = 10^{-4}$ (using inexact and high-accuracy proximity operator evaluations) and the difference between the $\beta_{\textup{TV}}=0$ and inexact $\beta_{\textup{TV}}=10^{-4}$ cases.}
    \label{fig:tv}
\end{figure}
\Cref{tab:exactstudy} demonstrates the performance of \Cref{alg:cap}.
\begin{table}[ht]
    \centering
    {\ttfamily
    \begin{tabular}{r|rrrrrrrr}
                    & time (s)    &    iter    & nobj   &   ngrad   &   nhess   &   nprox  & av-piter\\ \hline
      No-TV         & 9.94       & 42         & 43      &  43      &  924       &  967  & 0 \\
      inexact-TV    & 30.40      & 23         & 24      &  24      &  471       &  516 & 345\\
      accurate-TV   & 810.13     & 21         & 22      &  22      &  462       &  484  &  13057
    \end{tabular}
    }
    \caption{Algorithmic performance for topology optimization. The column {\tt time (s)} reports the wallclock time in seconds,  {\tt iter} trust-region iterations, {\tt nobj}, {\tt  ngrad} and {\tt nhess}
    function, gradient and Hessian evaluations, {\tt nprox} proximity operator
    evaluations, {\tt av-piter} the average number of \Cref{alg:struct.prim.dual} iterations per trust-region iteration to evaluate the proximity operator.}
    \label{tab:exactstudy}
\end{table}
The $\beta_{\textup{TV}}=0$ run converged in 42 iterations while $\beta_{\textup{TV}}=10^{-4}$ with inexact proximity operator evaluations converged in 23.
The proximity operator tolerance requested by the trust-region algorithm becomes quite tight as $\Psi_k$ decreases, requiring many iterations of \Cref{alg:struct.prim.dual} and resulting in a slower wallclock time --- approximately 30 seconds.
However, solving the TV subproblem to a tight tolerance, even $10^{-6}$ or tighter, takes upwards of 13 minutes despite the lower number of trust-region iterations. 
We leave the tasks of improving the computational efficiency for TV-regularized topology optimization and testing different approximations of the total variation as future work.

\appendix

\section{Inexact Gradient and Stationarity Metric}\label{app:inexact}

We now describe how to satisfy the inexact gradient and stationarity metric conditions in \Cref{assump:inexact}.
For this, we work in the same setting as \cite[Appendix~B]{Baraldi2023}. In particular, we assume that we have access to a gradient approximation $\hat{g}:X\times\mathbb{R}_+\to X$ that satisfies:
there exists $C_{\rm grad}\ge 0$ such that for any $x\in X$ and $\tau\ge0$, we have that
\begin{equation}\label{eq:inexactgrad}
  \|\nabla f(x)-\hat{g}(x,\tau)\| \le C_{\rm grad}\tau.
\end{equation}
\Cref{alg:inexactgrad} describes a procedure for satisfying \crefpart{assump:inexact}{assump:inexact.gradient} and \crefpart{assump:inexact}{assump:inexact.Psi} in finite time, which we prove in the next result.

\begin{algorithm}[!ht]
\caption{Inexact Gradient and Stationarity Metric Computation}
\label{alg:inexactgrad}
\begin{algorithmic}[1]
\Require{User-specified constants $\hat{\kappa}_{\textup{grad}},\,\hat{\kappa}_{\textup{stat}}\in\R_{++}$, user-specified tolerance $\hat{\mu}\in(0,1)$, user-specified parameters $\eta_{\textup{stat}}\in(0,1)$, $\zeta_{\textup{stat}}>1$ and $\xi_k\in\R_{++}$, the current iterate $x_k\in X$, step length $r_0\in\R_{++}$ and the trust-region radius $\Delta_k$}
\State{Set $\tau_k^-\gets\Delta_k$,
$\tau_k^{\textup{grad}}\gets\hat\kappa_{\textup{grad}}\tau_k^-$ and
$\tau_k^{\textup{stat}}\gets r_0\hat\kappa_{\textup{stat}}[\eta_{\textup{stat}}\min\{\hat\mu\tau_k^-,\xi_k\}]^{\zeta_{\textup{stat}}}$}
\State{Compute $g_k \gets \hat{g}(x_k,\tau_k^{\textup{grad}})$}
\State{Compute $x_k(r_0)\approx_1 \Prox_{r_0\phi}(x_k-r_0 g_k)$ with $\tau_k^{\textup{stat}}$-precision}
\State{Set $\tau_k^+\gets\min\{\psi_k(r_0),\Delta_k\}$}
\While{$\tau_k^+ < \hat\mu\tau_k^-$}
  \State{Set $\tau_k^-\gets\tau_k^+$, $\tau_k^{\textup{grad}}\gets\hat\kappa_{\textup{grad}}\tau_k^-$ and
$\tau_k^{\textup{stat}}\gets r_0\hat\kappa_{\textup{stat}}[\eta_{\textup{stat}}\min\{\hat\mu\tau_k^-,\xi_k\}]^{\zeta_{\textup{stat}}}$}
  \State{Compute $g_k \gets \hat{g}(x_k,\tau_k^{\textup{grad}})$}
  \State{Compute $x_k(r_0)\approx_1 \Prox_{r_0\phi}(x_k-r_0 g_k)$ with $\tau_k^{\textup{stat}}$-precision}
  \State{Set $\tau_k^+\gets\min\{\psi_k(r_0),\Delta_k\}$}
\EndWhile
\end{algorithmic}
\end{algorithm}

\begin{proposition}
  Let $x_k$ be an iterate of \Cref{alg:cap} for which
  \begin{equation}\label{eq:app.stat}
    \vartheta_k\coloneqq\frac{1}{r_0}\|\Prox_{r_0\phi}(x_k-r_0\nabla f(x_k))-x_k\| > 0.
  \end{equation}
  Then \Cref{alg:inexactgrad} terminates in finitely many iterations with $g_k$ and $\psi_k(r_0)$ satisfying \Cref{assump:inexact}
  with $\kappa_{\textup{grad}}=\hat\mu^{-1}\hat\kappa_{\textup{grad}}C_{\textup{grad}}$ and $\kappa_{\textup{stat}}=\hat\kappa_{\textup{stat}}$.
\end{proposition}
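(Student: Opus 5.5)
The plan is to prove the two claims separately: first that the \texttt{while} loop of \Cref{alg:inexactgrad} cannot run forever, using \eqref{eq:app.stat}, and then that the exit test $\tau_k^+\ge\hat\mu\tau_k^-$ alone forces both inexactness bounds.

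\textbf{Finite termination.} Argue by contradiction.
\begin{itemize}
\item Suppose the loop never exits. Each pass sets the new $\tau_k^-$ equal to a $\tau_k^+$ that satisfied $\tau_k^+<\hat\mu\tau_k^-$. Hence after $j$ passes $\tau_k^-\le\hat\mu^j\Delta_k$, which tends to $0$.
\item Consequently $\tau_k^{\textup{grad}}\to0$ and $\tau_k^{\textup{stat}}\to0$.
\item By \eqref{eq:inexactgrad}, $\|g_k-\nabla f(x_k)\|\le C_{\textup{grad}}\hat\kappa_{\textup{grad}}\tau_k^-\to0$.
\item \Cref{lem:inexac.exact.prox.diff} gives $\|x_k(r_0)-\Prox_{r_0\phi}(x_k-r_0g_k)\|\le\tau_k^{\textup{stat}}$.
\item Nonexpansivity of $\Prox_{r_0\phi}$ and the reverse triangle inequality then give
\[
  \psi_k(r_0)\ \ge\ \vartheta_k-\|g_k-\nabla f(x_k)\|-\tau_k^{\textup{stat}}/r_0 .
\]
\item The right-hand side tends to $\vartheta_k>0$. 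Since $\Delta_k>0$ is fixed during the loop, $\tau_k^+=\min\{\psi_k(r_0),\Delta_k\}$ stays bounded below by a positive constant for all late passes.
\item This contradicts $\tau_k^+<\hat\mu\tau_k^-\to0$.
\end{itemize}

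\textbf{Properties at exit.} At exit we have $\hat\mu\tau_k^-\le\tau_k^+=\min\{\psi_k(r_0),\Delta_k\}$, where $\tau_k^-$ is the value used to compute the final $g_k$ and $x_k(r_0)$.
\begin{itemize}
\item \emph{Gradient condition.} By \eqref{eq:inexactgrad},
\[
  \|g_k-\nabla f(x_k)\|\le C_{\textup{grad}}\hat\kappa_{\textup{grad}}\tau_k^-\le \hat\mu^{-1}C_{\textup{grad}}\hat\kappa_{\textup{grad}}\min\{\psi_k(r_0),\Delta_k\}.
\]
This is \crefpart{assump:inexact}{assump:inexact.gradient} with $\kappa_{\textup{grad}}=\hat\mu^{-1}\hat\kappa_{\textup{grad}}C_{\textup{grad}}$.
\item \emph{Stationarity condition.} Here $\bar\psi_k(r_0)$ is defined with the same $g_k$. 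By the reverse triangle inequality and \Cref{lem:inexac.exact.prox.diff},
\[
  |\psi_k(r_0)-\bar\psi_k(r_0)|\le\tau_k^{\textup{stat}}/r_0=\hat\kappa_{\textup{stat}}[\eta_{\textup{stat}}\min\{\hat\mu\tau_k^-,\xi_k\}]^{\zeta_{\textup{stat}}}.
\]
Since $\hat\mu\tau_k^-\le\min\{\psi_k(r_0),\Delta_k\}$ and $t\mapsto t^{\zeta_{\textup{stat}}}$ is monotone on $\R_+$, the right-hand side is at most $\hat\kappa_{\textup{stat}}[\eta_{\textup{stat}}\min\{\psi_k(r_0),\Delta_k,\xi_k\}]^{\zeta_{\textup{stat}}}$. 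This is \eqref{eq:assump.inexact.psi} with $\kappa_{\textup{stat}}=\hat\kappa_{\textup{stat}}$.
\end{itemize}

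The main obstacle is the termination step, namely keeping $\psi_k(r_0)$ bounded away from zero as the tolerances shrink. This is the only place where \eqref{eq:app.stat} and the nonexpansivity of the proximity operator are needed.
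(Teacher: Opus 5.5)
Your proof is correct and takes essentially the same route as the paper. Termination comes by contradiction from vanishing tolerances, $\vartheta_k>0$ and nonexpansivity of the proximity operator; the two bounds then follow from the exit test $\hat\mu\tau_k^-\le\min\{\psi_k(r_0),\Delta_k\}$ together with \eqref{eq:inexactgrad} and \Cref{lem:inexac.exact.prox.diff}. Your direct geometric bound $\tau_k^-\le\hat\mu^j\Delta_k$ and your explicit lower bound $\psi_k(r_0)\ge\vartheta_k-\|g_k-\nabla f(x_k)\|-\tau_k^{\textup{stat}}/r_0$ are a cleaner version of the paper's two-step limit argument, which first shows $\tau^\star>0$ and then contradicts it using $\hat\mu<1$.
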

\begin{proof}
Suppose that \Cref{alg:inexactgrad} produces infinitely many iterations.
Let $\{\tau_k^\ell\}$ denote the sequence of tolerances generated by \Cref{alg:inexactgrad} so that at the $\ell$-th iteration we have
\[
  \tau_k^{\ell+1} < \hat\mu\tau_k^\ell < \tau_k^\ell.
\]
Consequently $\{\tau_k^\ell\}$ is a nonnegative decreasing sequence and therefore has a limit $\tau^\star\ge 0$.
Moreover, because of \eqref{eq:app.stat}, we have that $\tau^\star > 0$.
Otherwise, $\tau_k^\ell\to 0$ which implies that $\tau_k^{\textup{grad}}\to 0$ and $\tau_k^{\textup{stat}}\to 0$ as \Cref{alg:inexactgrad} iterates.
Therefore, the sequence of approximations of $g_k$ converges to $\nabla f(x_k)$ by \eqref{eq:inexactgrad} and the sequence of approximations of $\psi_k(r_0)$ converges to $\vartheta_k$ by \Cref{lem:inexac.exact.prox.diff} combined with nonexpansivity of the proximity operator.
However, $\vartheta_k>0$, $\Delta_k>0$ and therefore, $\min\{\psi_k(r_0),\Delta_k\}$ converges to the positive limit $\min\{\vartheta_k,\Delta_k\}$, leading to a contradiction since $\tau_k^{\ell+1}=\min\{\psi_k(r_0),\Delta_k\}$.
Now, we notice that since $\tau_k^\ell\to\tau^\star$, we have that $\hat\mu\tau_k^\ell\to\hat\mu\tau^\star < \tau^\star$ and so for any $\varepsilon\in(0,(1-\hat\mu)\tau^\star]$, there exist $\ell_\varepsilon>0$ such that
\[
  \hat\mu\tau_k^\ell \le \hat\mu\tau^\star + \varepsilon \le \tau^\star \le \tau_k^{\ell+1} \quad\forall\,\ell\ge \ell_\varepsilon.
\]
Hence, \Cref{alg:inexactgrad} cannot produce an infinite sequence as was to be shown.
To finish, suppose \Cref{alg:inexactgrad} exits at iteration $\ell$ returning $\tau_k^{\ell}$ that satisfies
\[
  \tau_k^\ell \le \hat\mu^{-1}\tau_k^{\ell+1} = \hat\mu^{-1}\min\{\psi_k(r_0),\Delta_k\}.
\]
The satisfaction of \crefpart{assump:inexact}{assump:inexact.gradient} follows directly from this and the approximation condition \eqref{eq:inexactgrad}.
On the other hand, the satisfaction of \crefpart{assump:inexact}{assump:inexact.Psi}
follows since the last $\tau_k^{\textup{stat}}$ used to approximate $\psi_k(r_0)$ satisfies
\[
  \tau_k^{\textup{stat}}
  = r_0\hat\kappa_{\textup{stat}}[\eta_{\textup{stat}}\min\{\hat\mu\tau_k^\ell,\xi_k\}]^{\zeta_{\textup{stat}}}
  \le r_0\hat\kappa_{\textup{stat}}[\eta_{\textup{stat}}\min\{\psi_k(r_0),\Delta_k,\xi_k\}]^{\zeta_{\textup{stat}}}.
\]
The result then follows from \Cref{lem:inexac.exact.prox.diff}.
\end{proof}

\bibliographystyle{siamplain}
\bibliography{references}

\end{document}